\newcommand{\rn}[1]{\mathbb{R}^{#1}}
\newtheorem{thm}{Theorem}[section]
\newtheorem{lemma}[thm]{Lemma}
\newtheorem{remark}[]{Remark}
\newcounter{RomanNumber}
\numberwithin{equation}{section}
\begin{document}
\bibliographystyle{amsplain}
\title{Concentration Behavior of Nonlinear Hartree-type Equation with almost Mass Critical Exponent \\ }
\author{Yuan Li{$^a$} Dun Zhao{$^a$}  Qingxuan Wang$^{b}$\footnote{Corresponding author: this work is supported by the NSFC Grants 11801519 and 11475073.}
\renewcommand\thefootnote{}
\footnote{{E-mail addresses}: liyuan2014@lzu.edu.cn (Y. Li), zhaod@lzu.edu.cn (D. Zhao), wangqx@zjnu.edu.cn (Q. Wang).}
\setcounter{footnote}{0}
 \\\small ${^a}$ School of Mathematics and Statistics, Lanzhou
 University, Lanzhou, 730000, PR China
 \\\small ${^b}$ Department of Mathematics, Zhejiang Normal
 University,\\ \small Jinhua, 321004, Zhejiang, PR China}
\date{ }
\maketitle
\begin{abstract}
We study the following nonlinear  Hartree-type  equation
\begin{equation*}
-\Delta u+V(x)u-a(\frac{1}{|x|^\gamma}\ast |u|^2)u=\lambda u,~\text{in}~\mathbb{R}^N,
\end{equation*}
where $a>0$, $N\geq3$, $\gamma\in(0,2)$ and $V(x)$ is an external potential. We first study the asymptotic behavior of the ground state of equation for $V(x)\equiv1$, $a=1$ and $\lambda=0$ as $\gamma\nearrow2$.  Then we consider the case of some trapping  potential $V(x)$, and show that all the mass of ground states concentrate at a global minimum point of $V(x)$  as $\gamma\nearrow2$, which leads to symmetry breaking. Moreover, the concentration rate for  maximum points of ground states will be given.\vspace{0.2cm}\\
\textbf{Keywords:} Hartree-type equation; Energy estimate; Concentration; Symmetry breaking; Almost mass critical
\end{abstract}

\section{Introduction}\label{section:1}

In this paper, we consider the concentration behavior of the following nonlinear Hartree-type equation as $\gamma \nearrow 2$
\begin{equation}\label{eq1}
-\Delta u+V(x)u-a(\frac{1}{|x|^\gamma}\ast |u|^2)u=\lambda u,~~\text{in}~\mathbb{R}^N,
\end{equation}
 where $\gamma>0$, $N\geq3$, $a>0$ is the parameter, $\lambda\in\mathbb{R}$,  and $V(x)$ is an external potential. This equation can be used to describe the standing waves of the form $\psi(t,x)=e^{i\lambda t}u(x)$ of the focusing time-dependent equation
\begin{equation}\label{time-equ}
i\psi_t=-\Delta\psi+V(x)\psi-a(\frac{1}{|x|^\gamma}*|\psi|^2)\psi,
~~\text{in}~~\rn{N}\times\mathbb{R}_+.
\end{equation}

 Equation (\ref{time-equ}) can  describe the geometry of stars and planets in celestial mechanics \cite{L1933}, and quantum mechanics for investigating Bose-Einstein condensates (BEC) and Thomas-Fermi type problems \cite{DGPS1999}. In particular, for $a>0$ and $\gamma=1$, the  interaction is attractive Coulomb action, the model can describe the quantum mechanics of a polaron, see \cite{L1977,P1954}.

There have been a great deal of papers devote to equation \eqref{eq1} in different aspects. For results about the existence of positive ground states, one can see \cite{CSV2008,LZ2010,LXZ2016,MPT1998,MS2013,MS2015,Li-Ye2014}; Cingolani \cite{Cing-Secch2012} and Clap \cite{Clap-Sal-multi2013} investigated the existence of multiple solutions; for the uniqueness of the ground state, see for example \cite{LZ2010,L1976/1977,S,WY2017,X2016}; and the semi-classical analysis results, see \cite{Moroz-Schaf2015,Cing-Secchi2010} and the references therein.

 When $\gamma=2$, the above Hartree-type nonlinearity in $\mathbb{R}^N$ is corresponding to mass critical case for all $N\geq3$. The authors in \cite{DL2015,LXZ2016,Ye2016} considered a minimizing variational problem corresponding to (\ref{eq1}), and  proved that there exists a constant $a^*$ such that (\ref{min}) admits at least one minimizer if and only if $a<a^*$, where $a^*=\|Q_2\|^2_2,$
and $Q_2$ is the positive radially symmetric ground state of the following  equation
\begin{equation}\label{equ-gamma=2}
-\Delta u+u-(\frac{1}{|x|^2}\ast |u|^2)u=0,\ \ \text{in}~\mathbb{R}^N.
\end{equation}
Furthermore, the concentration and symmetry breaking of minimizers as $a\nearrow a^*$ were also obtained for different potentials  $V(x)$. We mention that such kind of mass critical problems have been studied for cubic nonlinearity in $\mathbb{R}^2$ in the past few years, see \cite{GS2014,Guo_zheng_zhou2015,Deng_Guo_Lu2015,GZ2014,WZ2017} and the references therein.

Here we state the work of Guo, Zeng and Zhou \cite{GZ2014}, in which they studied concentration behavior of the following almost mass critical nonlinear Schr\"{o}dinger equations as $q\nearrow 2$ ($2$ is mass critical exponent)
\begin{align*}
-\triangle u + (V(x)+\lambda)u-a |u|^q u=0, \ \ \ \ (t,x)\in \mathbb{R}^1\times\mathbb{R}^2,
\end{align*}
where $a>a^*=\|Q\|^2_2$, $Q$ is the unique (up to translations) radially symmetric positive solution of the following limiting equation
\begin{align}
-\triangle u + u- |u|^2 u=0,\ \ \ \text{in $\mathbb{R}^2$}.
\end{align}
By using constrained variational method and energy estimates they present a detailed analysis of the concentration and symmetry breaking of the solutions for above equation as $q\nearrow 2$.

 Inspired by \cite{GZ2014}, our focuses here will be  on concentration behavior of nonlinear Hartree-type equation (\ref{eq1}) as  almost mass critical exponent $\gamma\nearrow 2$.
 Comparing with the results in \cite{GZ2014},  we should deal with the nonlocal term $(\frac{1}{|x|^\gamma}\ast |u|^2)u$, and the almost mass critical exponent comes from Hartree action $\frac{1}{|x|^\gamma}$, not the power exponent of $u$,  this will bring some
difficulties; on the other hand, in this case we can consider in  $\mathbb{R}^N$ for all $N\geq3$, while \cite{GZ2014} only  studied the cubic power exponent in $\mathbb{R}^2$.

 Firstly, we study the asymptotic behavior of  ground states of  equation (\ref{eq1}) for $V(x)\equiv 1, a=1$ and $\lambda=0$ as $\gamma\nearrow 2$, that is
 \begin{equation}\label{equ-gamma<2}
-\Delta u+u-(\frac{1}{|x|^\gamma}*|u|^2)u=0.
\end{equation}
We have the following Theorem.
 \begin{thm}\label{strong convergence theorem}
 Let $Q_{\gamma}=Q_{\gamma}(|x|)$ be a positive radial ground state for equation \eqref{equ-gamma<2} with $0<\gamma<2$. Then we have that
\begin{align}\notag
\|Q_{\gamma}\|_{L^2(\mathbb{R}^N)}\rightarrow\|Q_2\|_{L^2(\mathbb{R}^N)}\ \ \ \text{as $\gamma\nearrow 2$}.
\end{align}
Here $Q_2=Q_2(|x|)>0$ is a positive radial ground states for equation \eqref{equ-gamma=2}.
\end{thm}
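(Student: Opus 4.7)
The plan is to derive an exact formula relating $\|Q_\gamma\|_2^2$ to the sharp constant of a scale-invariant Hartree-type interpolation inequality, and then to establish continuity of that sharp constant as $\gamma\nearrow 2$. Testing \eqref{equ-gamma<2} against $Q_\gamma$ and against $x\cdot\nabla Q_\gamma$ (the standard multiplier/scaling computation using $Q_\gamma(\cdot/\lambda)$) yields the Pohozaev pair
\begin{equation*}
\|\nabla Q_\gamma\|_2^2=\frac{\gamma}{4-\gamma}\|Q_\gamma\|_2^2,\qquad D_\gamma(Q_\gamma,Q_\gamma)=\frac{4}{4-\gamma}\|Q_\gamma\|_2^2,
\end{equation*}
where $D_\gamma(u,u):=\iint|u(x)|^2|u(y)|^2|x-y|^{-\gamma}\,dx\,dy$. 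The Hardy--Littlewood--Sobolev and Sobolev inequalities combine to give the scale-invariant bound
\begin{equation*}
D_\gamma(u,u)\le C(N,\gamma)\|\nabla u\|_2^{\gamma}\|u\|_2^{4-\gamma},
\end{equation*}
whose optimizers coincide (up to translation and dilation) with the ground states $Q_\gamma$, by the usual Weinstein characterization. Substituting the Pohozaev relations into the equality case produces the closed-form identity
\begin{equation*}
\|Q_\gamma\|_2^2=\frac{4(4-\gamma)^{\gamma/2-1}}{\gamma^{\gamma/2}\,C(N,\gamma)};
\end{equation*}
the same computation at $\gamma=2$ gives $\|Q_2\|_2^2=2/C(N,2)$. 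Since the explicit prefactor tends to $2$ as $\gamma\nearrow 2$, the theorem reduces to the continuity $C(N,\gamma)\to C(N,2)$.

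I would prove this continuity by two matching inequalities. The bound $\liminf_{\gamma\nearrow 2}C(N,\gamma)\ge C(N,2)$ is immediate: inserting the test function $u=Q_2$ into the Weinstein quotient, the exponential decay of $Q_2$ together with dominated convergence gives $D_\gamma(Q_2,Q_2)\to D_2(Q_2,Q_2)$, while $\|\nabla Q_2\|_2^\gamma\|Q_2\|_2^{4-\gamma}$ is manifestly continuous in $\gamma$. For the reverse inequality, rescale to $\widetilde Q_\gamma:=Q_\gamma/\|Q_\gamma\|_2$. The lower bound just established keeps $\|Q_\gamma\|_2$ bounded via the closed-form identity, and Pohozaev shows $\|\nabla\widetilde Q_\gamma\|_2^2=\gamma/(4-\gamma)$ remains bounded, so $\{\widetilde Q_\gamma\}$ is bounded in $H^1_r(\mathbb{R}^N)$. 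Compactness of the radial embedding $H^1_r\hookrightarrow L^p$ for $p\in(2,2^\ast)$ yields a subsequential limit $\widetilde Q_{\gamma_k}\to u^\ast$ strongly in $L^p$ and weakly in $H^1$. A uniform-in-$\gamma$ exponential tail estimate, obtained by viewing the equation for $\widetilde Q_\gamma$ as a linear elliptic problem with potential $V_\gamma(x)=\|Q_\gamma\|_2^2(|x|^{-\gamma}\ast\widetilde Q_\gamma^2)(x)$ vanishing uniformly in $\gamma$ as $|x|\to\infty$, upgrades convergence to $L^2$ and rules out $u^\ast\equiv 0$. Passing to the limit in the nonlocal nonlinearity (splitting $|x-y|^{-\gamma}$ into a near-diagonal piece handled by HLS and a far piece dominated by $|x-y|^{-\gamma_0}$ for some fixed $\gamma_0<2$) then shows that $u^\ast$ solves \eqref{equ-gamma=2} with $\|u^\ast\|_2=1$. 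By uniqueness of the positive radial ground state of \eqref{equ-gamma=2}, $u^\ast=Q_2/\|Q_2\|_2$; evaluating the Weinstein quotient at $u^\ast$ yields $C(N,\gamma_k)\to C(N,2)$.

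The main obstacle is this upper-bound direction: one must simultaneously control the nonlocal term under weak convergence and preclude concentration or escape of mass for $\widetilde Q_\gamma$ as the kernel transitions from subcritical to critical. The uniform exponential tail estimate on the family $\widetilde Q_\gamma$ is the decisive ingredient, after which the closed-form identity immediately delivers $\|Q_\gamma\|_2^2\to 2/C(N,2)=\|Q_2\|_2^2$, completing the proof.
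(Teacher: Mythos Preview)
Your overall strategy---reducing the problem to continuity of a variational level and reading off $\|Q_\gamma\|_2^2$ via Pohozaev---parallels the paper's proof, which does the same thing with the mountain pass level $I_\gamma=J_\gamma(Q_\gamma)=\frac{1}{4-\gamma}\|Q_\gamma\|_2^2$ instead of the Gagliardo--Nirenberg constant $C(N,\gamma)$. The two-sided estimate (test with $Q_2$ for one direction, pass to a weak limit of $Q_\gamma$ for the other) is also the same skeleton. However, your upper-bound argument has two genuine gaps.

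First, the step ``$u^\ast$ solves \eqref{equ-gamma=2} with $\|u^\ast\|_2=1$'' is circular. The equation satisfied by $\widetilde Q_\gamma=Q_\gamma/\|Q_\gamma\|_2$ is
\[
-\Delta\widetilde Q_\gamma+\widetilde Q_\gamma=\|Q_\gamma\|_2^2\bigl(|x|^{-\gamma}\ast\widetilde Q_\gamma^{\,2}\bigr)\widetilde Q_\gamma,
\]
and passing to the limit requires knowing $\lim\|Q_\gamma\|_2^2$, which is precisely what you are trying to prove. At best you obtain $-\Delta u^\ast+u^\ast=L^2(|x|^{-2}\ast(u^\ast)^2)u^\ast$ for some subsequential limit $L$ of $\|Q_{\gamma_k}\|_2$, so $u^\ast$ is not a solution of \eqref{equ-gamma=2} unless $L=1$.

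Second, you invoke uniqueness of the positive radial ground state of \eqref{equ-gamma=2}. As the paper notes in Remark~2, this is only known for $N=4$; for other dimensions the statement is open, so the identification $u^\ast=Q_2/\|Q_2\|_2$ is unjustified.

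Both gaps are avoidable. For the upper bound $\limsup_{\gamma\nearrow 2}C(N,\gamma)\le C(N,2)$ you do not need to identify $u^\ast$ at all: since $\|\nabla\widetilde Q_{\gamma_k}\|_2^{\gamma_k}\to 1$ and $D_{\gamma_k}(\widetilde Q_{\gamma_k},\widetilde Q_{\gamma_k})\to D_2(u^\ast,u^\ast)$ by radial compactness, you get $\lim C(N,\gamma_k)=D_2(u^\ast,u^\ast)$, and then the Gagliardo--Nirenberg inequality at $\gamma=2$ together with weak lower semicontinuity of $\|\nabla\cdot\|_2$ and $\|\cdot\|_2$ gives $D_2(u^\ast,u^\ast)\le C(N,2)\|\nabla u^\ast\|_2^2\|u^\ast\|_2^2\le C(N,2)$. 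This is exactly how the paper sidesteps uniqueness: it shows the weak limit $Q_\infty$ achieves the mountain pass level $I_2$, hence is a ground state, and uses only that all ground states share the same $L^2$ norm.
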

 This is not an easy result, we shall employ the mountain pass structure of corresponding energy functional  to obtain the uniform bound for $\|Q_\gamma\|_{H^1}$,  and then obtain the  above convergence by Pohozaev identity.

Next we will show  asymptotic behaviors of  ground states of  equation (\ref{eq1}) for general $V(x)$. We consider a corresponding  minimizing variational  problem. Notice that, (\ref{eq1}) is an Euler-Lagrange equation of following minimizing problem:
\begin{align}\label{min}
e_a(\gamma)&=\inf_{\{u\in\mathcal{ H},\int_{\mathbb{R}^N}u^2=1\}}E_\gamma (u);\\
E_\gamma (u)&=\int_{\mathbb{R}^N}(|\nabla u|^2+V(x)|u|^2)
-\frac{a}{2}D_\gamma(u,u),\ \ D_\gamma(u,u):=\int_{\mathbb{R}^N\times\mathbb{R}^N} \frac{|u(x)|^2|u(y)|^2}{|x-y|^\gamma}dxdy.\notag
\end{align}
Where
$$\mathcal{H}:=\{u\in H^1(\mathbb{R}^N): \int_{\mathbb{R}^N} V(x)|u|^2dx<\infty\}.$$
Here we assume that $V(x):\mathbb{R}^N\rightarrow \mathbb{R}^+$ is locally bounded and satisfies $V(x)\rightarrow \infty$ as $|x|\rightarrow \infty$. Without loss
 of generality, by adding a suitable constant we may assume that
\begin{align*}
\inf _{x\in \mathbb{R}^N}V(x)=0,
\end{align*}
and $\inf_{x\in \mathbb{R}^N}V(x)$ can be attained. In this paper we are interested in addressing the limit behavior of minimizers for (\ref{min}) when $\gamma\nearrow2$ and $a>a^*$. By \cite{HZ2017,HZ2017-1} we have the following  Gagliardo-Nirenberg inequality:
\begin{align}\label{GN}
D_\gamma(u,u)
 \leq C_\gamma\big( \int_{\mathbb{R}^N}|\nabla u|^2 dx\big)^{\frac{\gamma}{2}}\big(\int_{\mathbb{R}^N}| u|^2 dx\big)^\frac{4-\gamma }{2},
\end{align}
where the best constant $C_\gamma=\frac{4}{4-\gamma}(\frac{4-\gamma}{\gamma})
^\frac{\gamma}{2}\frac{1}{\|Q_\gamma\|_2^2}$ and $N\geq3$. $Q_\gamma$ is an optimal minimizer of above inequality, satisfying equation (\ref{equ-gamma<2}), and also a positive radial ground state for (\ref{equ-gamma<2}).

\begin{thm}\label{concentration theorem}
Assume that $N\geq3$, $a>a^\ast=\|Q_2\|^2_2$ and $Q_2$ is the positive radially symmetric ground state of \eqref{equ-gamma=2}.  And also assume that
$$ V\in \mathcal{C}^1({\mathbb{R}^N}),~~\lim_{|x|\rightarrow \infty}V(x)=\infty~~~and~~\inf_{x\in \mathbb{R}^N}V(x)=0.$$
Let $u_\gamma\in\mathcal{H}$ be a non-negative minimizer of \eqref{min} with $\gamma\in (0,2)$. Then for each sequence $\{\gamma_k\}$ with $\gamma_k\nearrow2$ as $k\rightarrow\infty $, there exists a subsequence of $\{\gamma _k\}$, still denoted by $ \{\gamma _k\}$, such that $u_{\gamma _k}$ concentrates at a global minimum point $y_0$ of $V(x)$ in the following sense: for each large k, $u_{\gamma _k}$ has a unique global maximum point ${\bar{z}_k}\in \mathbb{R}^N$, satisfies
\begin{align}\label{a7}
\lim_{k\rightarrow \infty}\Big(\frac{a}{\|Q_{\gamma_k}\|_2^2}\Big)^{-\frac{2}{2-\gamma_k}}u_{\gamma_k}
\Big(\Big(\frac{a}{\|Q_{\gamma_k}\|_2^2}\Big)
^{-\frac{1}{2-\gamma_k }}~x+\bar{z}_k\Big)=\frac{1}{\|Q_2\|_2}Q_2(|x|)~\text{in}~ H^1(\mathbb{R}^N),
\end{align}
where $\bar{z}_k\rightarrow y_0$ as $k\rightarrow \infty$.
\end{thm}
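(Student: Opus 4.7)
The overall plan is to match a sharp upper bound on $e_a(\gamma)$, coming from a trial function concentrated at a minimum of $V$, with a Gagliardo-Nirenberg lower bound, and then rescale a minimizer on the resulting scale to extract a non-trivial $H^1$-compact limit that must be $Q_2/\|Q_2\|_2$.

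\textbf{Step 1 (Energy asymptotics).} Let $y_0$ be a global minimum of $V$, so $V(y_0)=0$. The Pohozaev identities $(4-\gamma)\|\nabla Q_\gamma\|_2^2=\gamma\|Q_\gamma\|_2^2$ and $D_\gamma(Q_\gamma,Q_\gamma)=\tfrac{4}{4-\gamma}\|Q_\gamma\|_2^2$, derived from \eqref{equ-gamma<2}, turn $E_\gamma(\phi_\tau)$ for the test function $\phi_\tau(x):=\tau^{N/2}Q_\gamma(\tau(x-y_0))/\|Q_\gamma\|_2$ into an explicit function of $\tau$ minimized exactly at $\tau=\epsilon_\gamma^{-1}$. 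Together with Theorem \ref{strong convergence theorem} and the continuity of $V$ at $y_0$ (plus the uniform exponential decay of $Q_\gamma$ for $\gamma$ near $2$), this yields $e_a(\gamma)\le -\tfrac{2-\gamma}{(4-\gamma)\epsilon_\gamma^2}+o(1)$. A matching lower bound comes by inserting \eqref{GN} into $E_\gamma(u)\ge\|\nabla u\|_2^2-\tfrac{aC_\gamma}{2}\|\nabla u\|_2^\gamma$ for $\|u\|_2=1$ and minimizing in $\|\nabla u\|_2$. As consequences, for any minimizer $u_\gamma$ we obtain $\epsilon_\gamma\|\nabla u_\gamma\|_2\to 1$ and $\int V u_\gamma^2\to 0$.

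\textbf{Step 2 (Rescaling and identification of the limit).} With $\epsilon_\gamma=(a/\|Q_\gamma\|_2^2)^{-1/(2-\gamma)}$, choose $z_\gamma$ so that the rescaled function $w_\gamma(x):=\epsilon_\gamma^{N/2}u_\gamma(\epsilon_\gamma x+z_\gamma)$ does not vanish; such $z_\gamma$ exists by Lions' concentration-compactness because Step~1 already rules out vanishing. Then $\|w_\gamma\|_2=1$, $\|\nabla w_\gamma\|_2\to 1$, and the rescaled energy identity $\|\nabla w_\gamma\|_2^2-\tfrac{\|Q_\gamma\|_2^2}{2}D_\gamma(w_\gamma,w_\gamma)\to 0$ persists in the weak limit $w_0\not\equiv 0$. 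Using \eqref{GN} at $\gamma=2$ with sharp constant $2/\|Q_2\|_2^2$, this forces $w_0$ to be an extremal of the mass-critical Gagliardo-Nirenberg inequality, hence a dilation and translation of $Q_2/\|Q_2\|_2$. The choice of $z_\gamma$ kills the translation, and the quantitative convergence $\|\nabla w_\gamma\|_2\to 1$ combined with the $\gamma=2$ Pohozaev identity $\|\nabla Q_2\|_2=\|Q_2\|_2$ pins the dilation at $1$, giving $w_0=Q_2/\|Q_2\|_2$. Equality of $L^2$ norms upgrades the convergence to strong in $H^1$. Finally, because $V(x)\to\infty$ at infinity and $\int V u_\gamma^2\to 0$, the sequence $(z_\gamma)$ is precompact and can only accumulate at global minima of $V$, so $z_\gamma\to y_0\in V^{-1}(0)$ along the subsequence.

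\textbf{Step 3 (Unique maximum and conclusion).} Elliptic bootstrap in the rescaled Euler-Lagrange equation $-\Delta w_\gamma+\epsilon_\gamma^2 V(\epsilon_\gamma x+z_\gamma)w_\gamma-\|Q_\gamma\|_2^2(\tfrac{1}{|x|^\gamma}\ast w_\gamma^2)w_\gamma=\epsilon_\gamma^2\mu_\gamma w_\gamma$ upgrades the convergence to $C^2_{loc}(\mathbb{R}^N)$. Since $Q_2$ is radial, positive and strictly decreasing in $|x|$, its unique global maximum lies at the origin, hence the same holds for $w_\gamma$ at some $y_\gamma\to 0$ for large $\gamma$. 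Unscaling, $\bar z_\gamma:=\epsilon_\gamma y_\gamma+z_\gamma$ is the unique global maximum of $u_\gamma$ and converges to $y_0$, which is exactly \eqref{a7}.

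\textbf{Main obstacle.} The most delicate step is forcing the dilation parameter to be exactly $1$ in Step~2: the limiting Gagliardo-Nirenberg inequality is scale invariant, so the extremal $w_0$ is only identified up to dilation by GN saturation alone. Extracting the specific value requires sharpening Step~1 to the \emph{quantitative} statement $\epsilon_\gamma\|\nabla u_\gamma\|_2\to 1$ rather than mere boundedness, which in turn demands that the leading-order coefficients in the upper and lower energy bounds match exactly. A secondary technical point is justifying the joint passage $\gamma\to 2$ and $w_\gamma\to w_0$ inside the nonlocal quartic form $D_\gamma(w_\gamma,w_\gamma)$, which needs a Br\'ezis-Lieb-type decomposition adapted to the moving exponent.
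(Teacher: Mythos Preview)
Your outline is largely correct and tracks the paper's strategy closely: sharp two-sided energy asymptotics (your Step~1 is the content of the paper's Lemmas~3.1--3.2), a rescaling/compactness step to identify the profile, and a final localization of the maximum. Two points deserve comment.

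\textbf{Identifying the limit profile.} Here you diverge from the paper. The paper passes the rescaled Euler--Lagrange equation to the weak limit to obtain that $w_0$ solves $-\Delta w_0+\beta^2 w_0=a^\ast(|x|^{-2}\ast w_0^2)w_0$, and then uses the ground-state energy characterization ($J_2(Q_0)\ge I_2$ forces $\|w_0\|_2\ge1$, while Fatou gives $\|w_0\|_2\le1$) to conclude $\|w_0\|_2=1$ and hence strong $H^1$ convergence; only afterwards is $\beta=1$ extracted from the energy. Your route via Gagliardo--Nirenberg saturation is a legitimate alternative, but the phrase ``persists in the weak limit'' hides real work: from weak $H^1$ convergence alone neither $\|\nabla w_\gamma\|_2^2\to\|\nabla w_0\|_2^2$ nor $D_\gamma(w_\gamma,w_\gamma)\to D_2(w_0,w_0)$ follows. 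What makes your argument go through is the Br\'ezis--Lieb splitting $\|\nabla w_\gamma\|_2^2=\|\nabla w_0\|_2^2+\|\nabla r_\gamma\|_2^2+o(1)$ and its analogue for $D_\gamma$, after which applying the sharp $\gamma=2$ inequality to \emph{each} piece separately yields $\|\nabla w_0\|_2^2(1-\|w_0\|_2^2)+\|\nabla r_\gamma\|_2^2\,\|w_0\|_2^2\le o(1)$; since $w_0\not\equiv0$ this forces $\|w_0\|_2=1$ and $\|\nabla r_\gamma\|_2\to0$. You flag Br\'ezis--Lieb as a technicality, but it is the actual mechanism, not a side issue. The paper's PDE route avoids this splitting entirely.

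\textbf{A gap in Step 3.} $C^2_{\mathrm{loc}}$ convergence to $Q_2/\|Q_2\|_2$ guarantees only that $w_\gamma$ has a unique \emph{local} maximum near the origin on any fixed ball; it does not rule out another global maximum escaping to infinity. To conclude the global maximum is unique you need a uniform $L^\infty$ decay estimate $w_\gamma(x)\to0$ as $|x|\to\infty$ uniformly in $\gamma$. The paper obtains this (its Step~1) via De Giorgi--Nash--Moser applied to $-\Delta w_\gamma\le c(x)w_\gamma$ with $c(x)=\|Q_\gamma\|_2^2(|x|^{-\gamma}\ast w_\gamma^2)$ bounded in $L^{N/\gamma}$ uniformly, followed by a comparison with $Ce^{-\beta|x|/2}$. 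Your Step~3 should include this uniform decay; ``elliptic bootstrap'' to $C^2_{\mathrm{loc}}$ is not sufficient on its own.
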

\begin{remark}
Under the assumption of $V(x)$ in Theorem \ref{concentration theorem}, we have $\mathcal{H}\hookrightarrow L^p(\mathbb{R}^N)$ is compact, where $p\in[2,\frac{2N}{N-2})$. The  existence of the non-negative minimizer of \eqref{min} is similar to \cite{MS2013,Jean-Xia2017}.
\end{remark}
\begin{remark}
It follows from \cite{WY2017} that the uniqueness of positive ground state of \eqref{equ-gamma=2} holds for $N = 4$. If $N\geq3$ and $N\neq4$, we do not know that the positive ground state of equation \eqref{equ-gamma=2} is unique. Therefore, if $N=4$, we can obtain that $Q_\gamma\rightarrow Q_2$ strongly in $H^1(\mathbb{R}^4)$ and the right-hand side of the \eqref{a7} is unique. However, if $N\neq4$, we only know that there exists a positive radial ground state such that the above limit convergence to it.
\end{remark}

In the following we shall assume that the external potential $V$  satisfies that there exist $n\geq1$ distinct points $x_i\in \mathbb{R}^N$ with $V(x_i)=0$, while $V(x)>0$ otherwise. Moreover, there are numbers of $p_i>0 $ such that
\begin{align}\label{a8}
V(x)=O(|x-x_i|^{p_i})~near~x_i,~where~i=1,2,...,n,
\end{align}
and
$\lim_{x\rightarrow x_i}\frac{V(x)}{|x-x_i|^{p_i}}$ exists for all $1\leq i\leq n$.

Let $p=\max\{p_1,p_2,...,p_n\}$, and let $\lambda_i\in(0,\infty]$ be given by
\begin{align}\label{a9}
\lambda_i=\lim_{x\rightarrow x_i}\frac{V(x)}{|x-x_i|^{p_i}}.
\end{align}
Define $\lambda =\min\{\lambda_1,...,\lambda_n\}$ and let
\begin{align}\label{a10}
\mathcal{Z}:=\{x_i:\lambda_i=\lambda\}
\end{align}
denote the locations of the flattest global minima of $V(x)$. By the above notations, we have the following result, which tells us some further about
 the concentration point $y_0$ given by Theorem \ref{concentration theorem}.
\begin{thm}\label{theorem3}
Under the assumptions of Theorem \ref{concentration theorem} and let $V(x)$ satisfy also the additional condition (\ref{a8}), then the unique concentration point $y_0$ obtained in Theorem \ref{concentration theorem} has the properties:
\begin{align}\label{a11}
\lim_{k\rightarrow\infty}|\bar{z}_k-y_0|\Big(\frac{a}{\|Q_{\gamma_k}\|_2^2}\Big)
^{\frac{1}{2-\gamma_k}}=0~and~y_0\in\mathcal{Z}.
\end{align}
\end{thm}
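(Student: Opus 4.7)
The plan is to match a sharp upper bound for $e_a(\gamma_k)$, produced by a trial function concentrated at some $x_i\in\mathcal{Z}$, against a lower bound derived from the Gagliardo--Nirenberg inequality \eqref{GN} combined with the concentration profile \eqref{a7}. Set $\varepsilon_k:=(a/\|Q_{\gamma_k}\|_2^2)^{-1/(2-\gamma_k)}$; by $a>a^*$ and Theorem \ref{strong convergence theorem}, $\varepsilon_k\to 0$. For the upper bound, fix $x_i\in\mathcal{Z}$ and take the unit-$L^2$ trial function
\begin{equation*}
\phi_k(x):=\varepsilon_k^{-N/2}\frac{Q_{\gamma_k}\big(\varepsilon_k^{-1}(x-x_i)\big)}{\|Q_{\gamma_k}\|_2}.
\end{equation*}
A scaling computation combined with the Pohozaev identities $\|\nabla Q_{\gamma_k}\|_2^2=\tfrac{\gamma_k}{4-\gamma_k}\|Q_{\gamma_k}\|_2^2$ and $D_{\gamma_k}(Q_{\gamma_k},Q_{\gamma_k})=\tfrac{4}{4-\gamma_k}\|Q_{\gamma_k}\|_2^2$ gives
\begin{equation*}
\int_{\mathbb{R}^N}|\nabla\phi_k|^2\,dx-\tfrac{a}{2}D_{\gamma_k}(\phi_k,\phi_k)=-\tfrac{2-\gamma_k}{4-\gamma_k}\Big(\tfrac{a}{\|Q_{\gamma_k}\|_2^2}\Big)^{\!2/(2-\gamma_k)}=:G_k,
\end{equation*}
which is exactly $\min_{s>0}\big(s-\tfrac{aC_{\gamma_k}}{2}s^{\gamma_k/2}\big)$. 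Changing variables $y=\varepsilon_k^{-1}(x-x_i)$ in the potential term, using \eqref{a8}--\eqref{a9} with $p_i=p$ and $\lambda_i=\lambda$, together with the exponential decay of $Q_{\gamma_k}$ and Theorem \ref{strong convergence theorem}, produces
\begin{equation*}
\int_{\mathbb{R}^N}V(x)|\phi_k|^2\,dx=\lambda M\,\varepsilon_k^{p}+o(\varepsilon_k^{p}),\qquad M:=\int_{\mathbb{R}^N}|y|^{p}\frac{Q_2^2(y)}{\|Q_2\|_2^2}\,dy,
\end{equation*}
so $e_a(\gamma_k)\le G_k+\lambda M\varepsilon_k^{p}+o(\varepsilon_k^{p})$.

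\textbf{Lower bound and rescaled potential estimate.} Applied to the minimizer $u_{\gamma_k}$, inequality \eqref{GN} and the bound $s-\tfrac{aC_{\gamma_k}}{2}s^{\gamma_k/2}\ge G_k$ yield $e_a(\gamma_k)\ge G_k+\int_{\mathbb{R}^N} V u_{\gamma_k}^2\,dx$. Combined with the upper bound,
\begin{equation*}
\int_{\mathbb{R}^N}V(x)u_{\gamma_k}^2\,dx\le \lambda M\,\varepsilon_k^{p}+o(\varepsilon_k^{p}).
\end{equation*}
Set $\widetilde u_k(y):=\varepsilon_k^{N/2}u_{\gamma_k}(\varepsilon_k y+\bar{z}_k)$; this has unit $L^2$-mass and, by Theorem \ref{concentration theorem}, converges strongly to $Q_2/\|Q_2\|_2$ in $H^1(\mathbb{R}^N)$ (hence in $L^q$ for $q\in[2,2N/(N-2))$). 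Changing variables $x=\varepsilon_k y+\bar{z}_k$ turns the previous bound into
\begin{equation*}
\int_{\mathbb{R}^N}V(\varepsilon_k y+\bar{z}_k)\,\widetilde u_k^{\,2}(y)\,dy\le \lambda M\,\varepsilon_k^{p}+o(\varepsilon_k^{p}).
\end{equation*}

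\textbf{Concluding \eqref{a11}.} If $y_0\notin\{x_1,\dots,x_n\}$, then $V(y_0)>0$; since $V(\varepsilon_k y+\bar{z}_k)\to V(y_0)$ pointwise and $\widetilde u_k^{\,2}\to (Q_2/\|Q_2\|_2)^2$ in $L^1$, Fatou gives $\liminf\int V(\varepsilon_k y+\bar{z}_k)\widetilde u_k^{\,2}\,dy\ge V(y_0)>0$, contradicting $\varepsilon_k^{p}\to 0$. Hence $y_0=x_{i_0}$ for some $i_0\in\{1,\dots,n\}$. Set $\eta_k:=(\bar{z}_k-x_{i_0})/\varepsilon_k$; dividing the preceding display by $\varepsilon_k^{p}$ and applying Fatou with \eqref{a8}--\eqref{a9} gives
\begin{equation*}
\lambda_{i_0}\liminf_{k\to\infty}\varepsilon_k^{\,p_{i_0}-p}\int_{\mathbb{R}^N}|y+\eta_k|^{p_{i_0}}\frac{Q_2^2(y)}{\|Q_2\|_2^2}\,dy\le \lambda M.
\end{equation*}
If $p_{i_0}<p$, the left-hand side blows up, so $p_{i_0}=p$ (using $p=\max_i p_i$). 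Set $I(\eta):=\int_{\mathbb{R}^N}|y+\eta|^{p}Q_2^2(y)/\|Q_2\|_2^2\,dy$; by standard rearrangement arguments using that $Q_2^2$ is symmetric decreasing, $I$ attains its unique minimum $I(0)=M$ at $\eta=0$ and $I(\eta)\to\infty$ as $|\eta|\to\infty$. Then $\lambda_{i_0}\liminf I(\eta_k)\le\lambda I(0)$, combined with $\lambda_{i_0}\ge\lambda$ and $I(\eta_k)\ge I(0)$, forces $\lambda_{i_0}=\lambda$ (so $y_0\in\mathcal{Z}$) and $I(\eta_k)\to I(0)$, whence $\eta_k\to 0$; this is precisely \eqref{a11}.

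\textbf{Main obstacle.} The delicate step is the Fatou-type lower bound producing the weight $|y+\eta_k|^{p_{i_0}}$: one must split the integration into a bounded region (where the Taylor form \eqref{a8}--\eqref{a9} applies and $\widetilde u_k^{\,2}\to(Q_2/\|Q_2\|_2)^2$ pointwise a.e.) and a far region where $V$ grows but $\widetilde u_k$ is small in $L^q$ thanks to the compactness $\mathcal{H}\hookrightarrow L^q$ and the strong $H^1$-convergence. Upgrading boundedness of $\eta_k$ to $\eta_k\to 0$ additionally relies on the strict minimality of $I$ at the origin, which itself rests on the symmetric-decreasing character of $Q_2^2$.
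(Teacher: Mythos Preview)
Your approach is essentially the same as the paper's: an upper bound for $e_a(\gamma_k)-\tilde e_a(\gamma_k)$ via a trial function centered at a point of $\mathcal Z$, a lower bound from the Gagliardo--Nirenberg inequality isolating $\int V u_{\gamma_k}^2$, and a Fatou-type comparison after rescaling to force $p_{i_0}=p$, $\lambda_{i_0}=\lambda$, and $\eta_k\to 0$.

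Two technical points deserve attention. First, your trial $\phi_k$ is not compactly supported; under the sole hypotheses $V\in C^1(\mathbb R^N)$ and $V\to\infty$, the potential may grow super-exponentially, so $\phi_k$ need not belong to $\mathcal H$ and the expansion $\int V|\phi_k|^2=\lambda M\varepsilon_k^{p}+o(\varepsilon_k^{p})$ is not justified as written. The paper multiplies by a fixed cutoff $\varphi_R$ (see \eqref{b10}) precisely to avoid this, at the cost of harmless exponentially small errors. Second, your displayed Fatou inequality
\[
\lambda_{i_0}\,\liminf_{k\to\infty}\varepsilon_k^{\,p_{i_0}-p}\int_{\mathbb R^N}|y+\eta_k|^{p_{i_0}}\frac{Q_2^2(y)}{\|Q_2\|_2^2}\,dy\le \lambda M
\]
mixes the still-varying shift $\eta_k$ with the already-passed limit $Q_2^2/\|Q_2\|_2^2$ of $\widetilde u_k^{\,2}$; this is not a direct application of Fatou. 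The clean route (and the one the paper takes) is to first show $\{\eta_k\}$ is bounded---by contradiction, since if $|\eta_k|\to\infty$ then $|y+\eta_k|^{p_{i_0}}$ is uniformly large on the bulk of the $L^2$-mass of $\widetilde u_k$, forcing the lower bound to exceed any constant---then extract $\eta_k\to\bar z_0$ and apply Fatou to obtain $\lambda_{i_0}I(\bar z_0)\le\lambda M$, whence $p_{i_0}=p$, $\lambda_{i_0}=\lambda$, and equality forces $\bar z_0=0$. These are routine fixes; the strategy and the decisive inequalities coincide with the paper's proof.
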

This paper is organised as follows. Section 2 is devoted to the proof of Theorem \ref{strong convergence theorem} by using the mountain pass structure. In section 3, we prove Lemma \ref{energy relation} on energy estimates of minimizers for (\ref{min}). Finally, we use Lemma \ref{energy relation} to prove Theorem \ref{concentration theorem} and then utilize  the blowup analysis to complete the proof of Theorem \ref{theorem3} in section 4.
\section{The proof of Theorem \ref{strong convergence theorem}}
First note from \cite{MS2013} that the equation \eqref{equ-gamma<2} has a radially symmetric and monotone decreasing positive ground state solution $Q_\gamma$ which   satisfies the following Pohozaev identity:
\begin{align}\label{a4}
\frac{N-2}{2}\int_{\mathbb{R}^N}|\nabla Q_\gamma(x)|^2dx+\frac{N}{2}\int_{\mathbb{R}^N}|Q_\gamma(x)|^2dx
=\frac{2N-\gamma}{4}D_\gamma(Q_\gamma,Q_\gamma),
\end{align}
for all $0<\gamma<4$, one can derive from (\ref{equ-gamma<2}) and (\ref{a4}) that  $Q_\gamma(x)$ satisfies
\begin{align}\label{Pohazev-identity}
\frac{1}{\gamma}\int_{\mathbb{R}^N}|\nabla Q_\gamma(x)|^2dx=\frac{1}{4-\gamma}\int_{\mathbb{R}^N}|Q_\gamma(x)|^2dx=
\frac{1}{4}D_\gamma(Q_\gamma,Q_\gamma).
\end{align}
Now we define the energy functional of equation \eqref{equ-gamma=2} and \eqref{equ-gamma<2} by
\begin{align*}
J_\gamma(u)=\frac{1}{2}\int_{\mathbb{R}^N}|\nabla u|^2+|u|^2-\frac{1}{4}D_\gamma(u,u), \ \ \ \gamma\in(1,2].
\end{align*}
Let
\begin{align*}
G:=\{Q_2(x)\in H^1(\mathbb{R}^N):Q_2(x) \ \text{is a positive radial ground state of \eqref{equ-gamma=2}}\}.
\end{align*}
Combining \eqref{a4} and \eqref{Pohazev-identity}, we get the ground state energy
\begin{align*}
J_2(Q_2)=\frac{1}{2}\int_{\mathbb{R}^N}|Q_2(x)|^2dx,~\ \ ~Q_2\in G.
\end{align*}
Then we have $a^*=\int_{\mathbb{R}^N}|Q_2(x)|^2dx$. That is, all positive ground states of equation \eqref{equ-gamma=2} have the same $L^2$-norm. Similarly, we can obtain that all positive ground states of equation \eqref{equ-gamma<2} have the same $L^2$-norm if $\gamma$ is fixed.

To prove Theorem \ref{strong convergence theorem}, we divide the proof into several lemmas.
The ground state energy level $J_\gamma$ satisfies the mountain pass characterization, i.e.,
\begin{align}\notag
I_\gamma=\min_{u\in H^1(\mathbb{R}^N)\setminus\{0\}}\max_{t\geq0}J_\gamma(tu).
\end{align}
First, we have the following convergence holds.
\begin{lemma}\label{hartree-convergence}
Let $\{\gamma_n\}<2$ be a sequence converging to $2$. Then for any $u\in H^1_{rad}(\mathbb{R}^N)$
\begin{align*}
D_{\gamma_n}(u,u)\rightarrow D_2(u,u),~as~n\rightarrow\infty.
\end{align*}
Let $\{u_n\}\subset H^1_{rad}(\mathbb{R}^N)$ be a sequence converging weakly in $H^1_{rad}(\mathbb{R}^N)$ to some $u_0\in H^1_{rad}(\mathbb{R}^N)$. Then as $n\rightarrow\infty$
\begin{align*}
D_{\gamma_n}(u_n,u_n)\rightarrow D_2(u_0,u_0).
\end{align*}
\end{lemma}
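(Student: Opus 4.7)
The plan is to treat the two convergences in turn, and then to reduce the second to the first via bilinearity, a Hardy--Littlewood--Sobolev (HLS) estimate, and the radial compactness lemma.

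For Part 1, with $u\in H^1_{rad}$ fixed, I would invoke dominated convergence on
\[
D_{\gamma_n}(u,u)=\int_{\mathbb{R}^N\times\mathbb{R}^N}\frac{|u(x)|^2|u(y)|^2}{|x-y|^{\gamma_n}}\,dx\,dy.
\]
Since $\gamma_n\nearrow 2$, I may assume $\gamma_n\in[\gamma_0,2)$ for a fixed $\gamma_0\in(0,2)$, which gives the uniform pointwise bound
\[
\frac{1}{|x-y|^{\gamma_n}}\le \frac{\chi_{\{|x-y|\le 1\}}}{|x-y|^{2}}+\frac{\chi_{\{|x-y|>1\}}}{|x-y|^{\gamma_0}}.
\]
The corresponding dominating double integral is finite: the near-diagonal piece is controlled by HLS with exponent $2$, which for $N\ge 3$ only requires $u\in L^{2N/(N-1)}(\mathbb{R}^N)$, and this is supplied by the Sobolev embedding $H^1\hookrightarrow L^{2N/(N-1)}$; the far-field piece is dominated by $\|u\|_{L^2}^4$. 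Pointwise a.e.\ convergence $|x-y|^{-\gamma_n}\to |x-y|^{-2}$ then closes Part 1.

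For Part 2, I would decompose
\[
D_{\gamma_n}(u_n,u_n)-D_2(u_0,u_0)=\bigl[D_{\gamma_n}(u_n,u_n)-D_{\gamma_n}(u_0,u_0)\bigr]+\bigl[D_{\gamma_n}(u_0,u_0)-D_2(u_0,u_0)\bigr],
\]
so that the second bracket vanishes in the limit by Part 1 applied to $u_0$. For the first bracket, I would write $|u_n|^2|u_n|^2-|u_0|^2|u_0|^2=(|u_n|^2-|u_0|^2)|u_n|^2+|u_0|^2(|u_n|^2-|u_0|^2)$ and apply HLS with conjugate exponent $p_n=\frac{2N}{2N-\gamma_n}$ to obtain
\[
\bigl|D_{\gamma_n}(u_n,u_n)-D_{\gamma_n}(u_0,u_0)\bigr|\le C_{\gamma_n}\bigl\||u_n|^2-|u_0|^2\bigr\|_{L^{p_n}}\bigl(\||u_n|^2\|_{L^{p_n}}+\||u_0|^2\|_{L^{p_n}}\bigr),
\]
followed by H\"older $\||u_n|^2-|u_0|^2\|_{L^{p_n}}\le \|u_n-u_0\|_{L^{2p_n}}\|u_n+u_0\|_{L^{2p_n}}$. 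Note that $2p_n\to 2N/(N-1)\in(2,2N/(N-2))$ and that the HLS constant $C_{\gamma_n}$ stays bounded since $\gamma_n$ remains in a compact subset of $(0,N)$.

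The main obstacle is that the exponent $2p_n$ drifts with $n$, so the compact embedding $H^1_{rad}(\mathbb{R}^N)\hookrightarrow L^r(\mathbb{R}^N)$ at any fixed $r\in(2,2N/(N-2))$ does not directly yield $\|u_n-u_0\|_{L^{2p_n}}\to 0$. I would resolve this by Lebesgue interpolation: pick $2<r_1<2N/(N-1)<r_2<2N/(N-2)$; the uniform $H^1$ bound on $\{u_n\}$ (implied by weak convergence) gives uniform control of $\|u_n\|_{L^r}$ throughout $[r_1,r_2]$ via Sobolev embedding, while the radial compactness lemma upgrades weak convergence to strong convergence $u_n\to u_0$ in both $L^{r_1}$ and $L^{r_2}$. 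Interpolation between these two then forces $\|u_n-u_0\|_{L^{2p_n}}\to 0$ for $n$ large enough that $2p_n\in[r_1,r_2]$, closing Part 2.
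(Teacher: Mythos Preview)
Your proof is correct and follows precisely the approach indicated in the paper, which itself merely records a one-line sketch (``combining Hardy--Littlewood--Sobolev inequality and the compact Sobolev embedding''); you have simply filled in the details that the authors leave to the reader, including the dominated-convergence argument for Part~1 and the HLS-plus-interpolation estimate handling the drifting exponent in Part~2.
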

\begin{proof}
By combining Hardy-Littlewood-Sobolev inequality and the compact Sobolev embedding. It is easy to see the above convergence holds.
\end{proof}
Next, we prove the following uniform estimate for the ground states $Q_\gamma$.
\begin{lemma}\label{bounded lemma}
Assume that $Q_\gamma\in H^1(\mathbb{R}^N)$ be the positive radial ground state of equation (\ref{equ-gamma<2}), where $\gamma\in(1,2)$. Then there exists a positive constant $C>0$ such that
\begin{align*}
\|Q_\gamma\|_{H^1}\leq C.
\end{align*}
\end{lemma}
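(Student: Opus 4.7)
The plan is to combine the Pohozaev identities \eqref{Pohazev-identity} with the mountain pass characterization of $I_\gamma$ stated just above Lemma \ref{hartree-convergence}, which will reduce the $H^1$-bound to a uniform upper bound on the ground state energy $I_\gamma = J_\gamma(Q_\gamma)$. Concretely, testing equation \eqref{equ-gamma<2} against $Q_\gamma$ gives the Nehari identity $\|\nabla Q_\gamma\|_2^2 + \|Q_\gamma\|_2^2 = D_\gamma(Q_\gamma,Q_\gamma)$, hence $I_\gamma = \tfrac{1}{4} D_\gamma(Q_\gamma,Q_\gamma)$. Substituting this into \eqref{Pohazev-identity} yields the explicit relations
\begin{equation*}
\|Q_\gamma\|_2^2 = (4-\gamma) I_\gamma, \qquad \|\nabla Q_\gamma\|_2^2 = \gamma\, I_\gamma, \qquad \|Q_\gamma\|_{H^1}^2 = 4\, I_\gamma,
\end{equation*}
so everything reduces to showing $\sup_{\gamma \in (1,2)} I_\gamma < \infty$.

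For the upper bound on $I_\gamma$ I would fix once and for all a radial, nonnegative test function $u_0 \in C_c^\infty(\mathbb{R}^N) \setminus \{0\}$. The scalar map $t \mapsto J_\gamma(tu_0) = \tfrac{t^2}{2}\|u_0\|_{H^1}^2 - \tfrac{t^4}{4}D_\gamma(u_0,u_0)$ attains its maximum at $t_\ast^2 = \|u_0\|_{H^1}^2/D_\gamma(u_0,u_0)$, so the mountain pass characterization gives
\begin{equation*}
I_\gamma \leq \max_{t\geq 0} J_\gamma(t u_0) = \frac{\|u_0\|_{H^1}^4}{4\, D_\gamma(u_0,u_0)}.
\end{equation*}
Since the numerator is a $\gamma$-independent constant, the lemma will follow once I have a uniform positive lower bound on $D_\gamma(u_0,u_0)$ across $\gamma \in (1,2)$.

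The only real obstacle is this last lower bound, but the compact-support choice of $u_0$ makes it elementary. If $\mathrm{supp}(u_0)$ sits inside a ball of radius $R$, then $|x-y| \leq 2R$ on $\mathrm{supp}(u_0) \times \mathrm{supp}(u_0)$, so $|x-y|^{-\gamma} \geq (2R)^{-\gamma} \geq \min\{1,(2R)^{-2}\}$ uniformly for $\gamma \in (0,2)$. This yields $D_\gamma(u_0,u_0) \geq \min\{1,(2R)^{-2}\}\|u_0\|_2^4 > 0$ uniformly in $\gamma$, closing the argument. In the main regime $\gamma \nearrow 2$ one could alternatively invoke Lemma \ref{hartree-convergence} directly, which already ensures the strict positivity of the limit $D_2(u_0,u_0)$ and hence the uniform positivity of $D_\gamma(u_0,u_0)$ for $\gamma$ near $2$.
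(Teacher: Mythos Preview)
Your argument is correct. Both you and the paper reduce the $H^1$-bound to a uniform upper bound on the mountain pass level $I_\gamma$ via the Nehari relation $\|Q_\gamma\|_{H^1}^2 = 4I_\gamma$, and both bound $I_\gamma$ from above by $\max_{t\ge 0}J_\gamma(tu)$ for a fixed test function $u$. The difference lies in the choice of $u$: you take a compactly supported $u_0\in C_c^\infty$ and exploit $|x-y|\le 2R$ on its support to get an elementary, $\gamma$-uniform lower bound on $D_\gamma(u_0,u_0)$, hence a bound on $I_\gamma$ valid on the whole interval $(1,2)$; the paper instead takes $u=Q_2$ and invokes Lemma \ref{hartree-convergence} to obtain $J_\gamma(t_\gamma Q_2)\le J_2(Q_2)+o(1)$, which simultaneously yields the sharper asymptotic statement $\limsup_{\gamma\to 2}I_\gamma\le I_2$. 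Your route is more self-contained and genuinely uniform in $\gamma$, while the paper's choice is tailored to the overall proof of Theorem \ref{strong convergence theorem}, since the inequality $\limsup I_\gamma\le I_2$ is exactly one half of the convergence $I_\gamma\to I_2$ established there (the other half being Lemma \ref{lower bounded lemma}).
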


\begin{proof}
We claim that
\begin{align}\notag
\limsup_{\gamma\rightarrow2}I_\gamma\leq I_2.
\end{align}
 Indeed, let $Q_2$ be a ground state of \eqref{equ-gamma=2}. Define $t_\gamma$ by the positive number satisfying
\begin{align}\notag
J_\gamma(t_\gamma Q_2)=\max_{t>0}J_\gamma(tQ_2).
\end{align}
Elementary computation shows that
\begin{align}\notag
t_\gamma=\Big(\frac{\|Q_2\|_{H^1}^2}{D_\gamma(Q_2,Q_2)}\Big)^{\frac{1}{2}}
\end{align}
and $\lim_{\gamma\rightarrow2}t_\gamma=1$. Now, by Lemma \ref{hartree-convergence}, we see from the mountain pass characterization of the ground states that as $\gamma\rightarrow2$,
\begin{align*}
J_\gamma(Q_\gamma)&\leq J_\gamma(t_\gamma Q_2)\\&=J_2(t_\gamma Q_2)+\frac{1}{4}\Big(\int_{\mathbb{R}^N}(\frac{1}{|x|^2}*|t_\gamma Q_2|^2)|t_\gamma Q_2|^2-\int_{\mathbb{R}^N}(\frac{1}{|x|^\gamma}*|t_\gamma Q_2|^2)|t_\gamma Q_2|^2\Big)\\&\leq J_2(Q_2)+o(1).
\end{align*}
Multiplying the equation \eqref{equ-gamma<2} by $Q_\gamma$ and integrating by part, we get
\begin{align}\notag
(\frac{1}{2}-\frac{1}{4})\|Q_\gamma\|_{H^1}^2=J_\gamma(Q_\gamma).
\end{align}
By the above argument, we know that $\|Q_\gamma\|_{H^1}$ is uniformly bounded for $0<\gamma<2$.
\end{proof}
The above Lemma \ref{bounded lemma} implies that $\{Q_{\gamma}\}$ is a bounded sequence in $H^1(\mathbb{R}^N)$. Therefore we can assume that, up to a subsequence, $Q_{\gamma_n}$ converges weakly to a nonnegative radial function $Q_\infty\in H^1_{rad}(\mathbb{R}^N)$, that is
\begin{align}\notag
Q_{\gamma_n}\rightharpoonup Q_\infty\ ~in~H^1(\mathbb{R}^N).
\end{align}
Moreover, by the compact embedding $H^1_{rad}(\mathbb{R}^N)\hookrightarrow L^q(\mathbb{R}^N)$ for any $2<q<\frac{2N}{N-2}$ (see Strauss \cite{WS1977}), we can assume that
\begin{align}\notag
Q_{\gamma_n}\rightarrow Q_\infty\ ~in~L^q(\mathbb{R}^N)
\end{align}
for any $2<q<\frac{2N}{N-2}$, and
\begin{align}\notag
Q_{\gamma_n}\rightarrow Q_\infty\ ~a.e.~in~\mathbb{R}^N.
\end{align}
By Lemma \ref{hartree-convergence}, we easily deduce that
\begin{align}\notag
\lim_{\gamma_n\rightarrow2}\int_{\mathbb{R}^N}(|x|^{-\gamma_n}*|Q_{\gamma_n}|^2)|Q_{\gamma_n}|^2dy
=\int_{\mathbb{R}^N}(|x|^{-2}*|Q_\infty|^2)|Q_\infty|^2dy.
\end{align}
Furthermore, we multiply the equation \eqref{equ-gamma<2} by $Q_\gamma$ and multiply the equation \eqref{equ-gamma=2} by $Q_\infty$ to get
\begin{align*}
\|Q_{\gamma_n}\|_{H^1}=D_{\gamma_n}(Q_{\gamma_n},Q_{\gamma_n})\rightarrow D_2(Q_\infty,Q_\infty)=\|Q_\infty\|_{H^1}.
\end{align*}
Combining this with the weak convergence of $Q_{\gamma_n}$, we obtain the strong convergence of $Q_{\gamma_n}$ to $Q_\infty$ in $H^1(\mathbb{R}^N)$.

\begin{lemma}\label{lower bounded lemma}
It satisfies
\begin{align}\notag
\liminf_{\gamma_n\rightarrow2}I_{\gamma_n}\geq I_2.
\end{align}
\end{lemma}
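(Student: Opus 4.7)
The plan is to leverage the strong convergence $Q_{\gamma_n} \to Q_\infty$ in $H^1(\mathbb{R}^N)$ established just before the lemma, and identify $Q_\infty$ as a nontrivial (radial, positive) critical point of $J_2$. Multiplying \eqref{equ-gamma<2} by $Q_\gamma$ gives $I_{\gamma_n} = J_{\gamma_n}(Q_{\gamma_n}) = \tfrac14 \|Q_{\gamma_n}\|_{H^1}^2$, so strong $H^1$-convergence already yields $\lim_n I_{\gamma_n} = \tfrac14 \|Q_\infty\|_{H^1}^2$. It therefore suffices to show $\tfrac14 \|Q_\infty\|_{H^1}^2 \geq I_2$.

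First I would pass to the limit in the weak formulation of \eqref{equ-gamma<2}: for $\varphi\in C_c^\infty(\mathbb{R}^N)$ the linear terms converge by strong $H^1$-convergence, while the Hartree term is handled via Hardy--Littlewood--Sobolev together with the strong $L^p$-convergence of $Q_{\gamma_n}$ on bounded sets (compare Lemma \ref{hartree-convergence}), showing that $Q_\infty$ is a nonnegative radial weak solution of \eqref{equ-gamma=2}. Testing against $Q_\infty$ then gives $\|Q_\infty\|_{H^1}^2 = D_2(Q_\infty,Q_\infty)$, so $J_2(Q_\infty) = \tfrac14 \|Q_\infty\|_{H^1}^2$.

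The hard part is ruling out $Q_\infty \equiv 0$, for which naively using the sharp constant $C_\gamma$ in \eqref{GN} is circular, since it involves $\|Q_\gamma\|_2^2$ itself. Instead I would combine Hardy--Littlewood--Sobolev with Sobolev interpolation to obtain a Gagliardo--Nirenberg inequality
\begin{align*}
D_\gamma(u,u) \leq K(N,\gamma)\,\|\nabla u\|_2^{\gamma}\,\|u\|_2^{4-\gamma},
\end{align*}
with $K(N,\gamma)$ continuous in $\gamma$ and thus uniformly bounded on a compact neighborhood of $2$. Applying this to $u = Q_\gamma$, together with the identity $\|Q_\gamma\|_{H^1}^2 = D_\gamma(Q_\gamma, Q_\gamma)$ and the Pohozaev relation \eqref{Pohazev-identity} (which yields $\|\nabla Q_\gamma\|_2^2 = \tfrac{\gamma}{4-\gamma}\|Q_\gamma\|_2^2$), a direct computation produces a uniform lower bound $\|Q_\gamma\|_2^2 \geq c_0 > 0$ for $\gamma$ close to $2$. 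By strong $L^2$-convergence, $\|Q_\infty\|_2^2 \geq c_0 > 0$, hence $Q_\infty \not\equiv 0$.

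To finish, I would invoke the mountain pass characterization $I_2 = \min_{v\ne 0}\max_{t\geq 0}J_2(tv)$: since $Q_\infty$ is a nontrivial critical point of $J_2$, the map $t\mapsto J_2(tQ_\infty)$ attains its maximum at $t=1$ (standard under this Hartree structure), so $J_2(Q_\infty) = \max_{t\geq 0}J_2(tQ_\infty)\geq I_2$. Combining, $\liminf_n I_{\gamma_n} = \tfrac14\|Q_\infty\|_{H^1}^2 = J_2(Q_\infty) \geq I_2$, as desired. The main obstacle throughout is the nontriviality step, because it requires a Gagliardo--Nirenberg estimate whose constant does not implicitly depend on the very quantity $\|Q_\gamma\|_2$ being bounded below.
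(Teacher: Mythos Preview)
Your argument is correct but organised differently from the paper's. The paper works directly with $Q_{\gamma_n}$ as a test function for $I_2$: from $I_{\gamma_n}=\max_{t}J_{\gamma_n}(tQ_{\gamma_n})\geq J_{\gamma_n}(tQ_{\gamma_n})$ it writes $J_{\gamma_n}(tQ_{\gamma_n})=J_2(tQ_{\gamma_n})+o(1)\,t^4$, picks $t=t_{\gamma_n}$ maximising $t\mapsto J_2(tQ_{\gamma_n})$, shows $t_{\gamma_n}\to 1$, and then invokes the mountain pass characterisation of $I_2$ to get $J_2(t_{\gamma_n}Q_{\gamma_n})\geq I_2$. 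You instead pass to the limit in the equation first, identify $Q_\infty$ as a nontrivial critical point of $J_2$, and then use $Q_\infty$ itself as the test function. The paper's route is slightly shorter since it never needs to pass to the limit in the PDE; on the other hand, your explicit nontriviality step (the $\gamma$-uniform Gagliardo--Nirenberg bound giving $\|Q_\gamma\|_2\geq c_0>0$) is a genuine addition, because the paper's own argument also implicitly needs $Q_\infty\neq 0$ to ensure $t_{\gamma_n}$ stays bounded (equivalently $D_2(Q_{\gamma_n},Q_{\gamma_n})\not\to 0$), a point it does not address.
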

\begin{proof}
We see from the mountain pass characterization of ground states to \eqref{equ-gamma<2}, that for every $t>0$,
\begin{align*}
I_{\gamma_n}&\geq J_{\gamma_n}(tQ_{\gamma_n})\\
&=J_2(tQ_{\gamma_n})+\frac{t^4}{4}
\Big(\int_{\mathbb{R}^N}(\frac{1}{|x|^{2}}*| Q_{\gamma_n}|^2)| Q_{\gamma_n}|^2
-\int_{\mathbb{R}^N}(\frac{1}{|x|^{\gamma_n}}*|Q_{\gamma_n}|^2)|Q_{\gamma_n}|^2\Big)\\
&=J_2(tQ_{\gamma_n})+o(1)t^4
\end{align*}
as $\gamma_n\rightarrow2$. Taking $t=t_{\gamma_n}$ such that $t_{\gamma_n}$ satisfies
\begin{align}\notag
J_2(t_{\gamma_n} Q_{\gamma_n})=\max_{t\geq0}J_2(tQ_{\gamma_n}).
\end{align}
Elementary computation shows
\begin{align}\notag
t_{\gamma_n}=\Big(\frac{\|Q_{\gamma_n}\|_{H^1(\mathbb{R}^N)}^2}
{D_{\gamma_n}(Q_{\gamma_n},Q_{\gamma_n})}\Big)^{\frac{1}{2}}
\rightarrow\Big(\frac{\|Q_\infty\|_{H^1(\mathbb{R}^N)}^2}{D_2(Q_\infty,Q_\infty)}\Big)^{\frac{1}{2}}=1
~as~{\gamma_n}\rightarrow2
\end{align}
as in the proof of Lemma \ref{bounded lemma}. Then we get
\begin{align}\notag
I_{\gamma_n}\geq J_2(t_{\gamma_n} Q_{\gamma_n})+o(1)\geq I_2+o(1),
\end{align}
where the last inequality comes from the mountain pass characterization of $I_2$. The proof is complete.
\end{proof}
\renewcommand{\proofname}{\bf Proof of Theorem \ref{strong convergence theorem}}
\begin{proof}
Combining Lemma \ref{bounded lemma} and Lemma \ref{lower bounded lemma}, we conclude
\begin{align}\notag
\lim_{\gamma_n\rightarrow2}I_{\gamma_n}=I_2.
\end{align}
Hence we get
\begin{align}\notag
J_2(Q_\infty)=I_2.
\end{align}
In other words, $Q_\infty$ is a  positive radial ground state to \eqref{equ-gamma=2}, i.e. $Q_\infty\in G$ (defined above).

On the other hand, from the identity \eqref{Pohazev-identity}, we know that
\begin{align*}
I_\gamma=J_\gamma(Q_\gamma)=\frac{1}{4-\gamma}\int_{\mathbb{R}^N}|Q_\gamma(x)|^2dx~
\text{and}~
I_2=J_2(Q_2)=\frac{1}{2}\int_{\mathbb{R}^N}|Q_2(x)|^2dx.
\end{align*}
Therefore, from above we can obtain that
\begin{align}\notag
\int_{\mathbb{R}^N}|Q_\gamma(x)|^2dx\rightarrow\int_{\mathbb{R}^N}|Q_\infty(x)|^2dx
=a^*.
\end{align}
Let $\{Q_\gamma\}\subset H^1_{rad}(\mathbb{R}^N)$ be a family of positive radial ground state to \eqref{equ-gamma<2} for $\gamma$ near 2 and $\gamma<2$. Suppose $\|Q_\gamma\|_{L^2(\mathbb{R}^N)}$ does not converge to the $\|Q_2\|_{L^2(\mathbb{R}^N)}$. Then there exist a positive number $\varepsilon_0$ and a sequence $\{\gamma_k\}\rightarrow2$ such that $\|Q_{\gamma_k}\|_{L^2(\mathbb{R}^N)}-\|Q_2\|_{L^2(\mathbb{R}^N)}\geq\varepsilon_0$ which contradicts to Lemma \ref{bounded lemma} and Lemma \ref{lower bounded lemma}. The proof of Theorem \ref{strong convergence theorem} is complete.
\end{proof}
\renewcommand{\proofname}{Proof}
\section{Energy estimate}
In this section, the main purpose is to establish Lemma \ref{energy relation}. One can obtain from \cite{MS2013} that there exist positive constants $\delta$, $C$ and $R_0$ independent of $\gamma\in(0,2]$, such that $|x|>R_0$
\begin{align}\label{decay exponentially}
|Q_\gamma|+|\nabla Q_\gamma|\leq Ce^{-\delta|x|}~for~\gamma\in(0,2].
\end{align}
We next denote $\tilde{E}_\gamma(u)$ the following energy functional without the potential:
\begin{align}\label{without potential functional}
\tilde{E}_\gamma(u):=\int_{\mathbb{R}^N}|\nabla u(x)|^2dx-\frac{a}{2}D_\gamma(u,u)~u\in H^1(\mathbb{R}^N),
\end{align}
and consider the associated minimization problem
\begin{align*}
\tilde{e}_a(\gamma)=\inf_{\{u\in H^1(\mathbb{R}^N):\|u\|_2^2=1\}}\tilde{E}_\gamma(u).
\end{align*}
 Then the following Lemma gives refined information on the minimum energy $\tilde{e}_a(\gamma)$ as well as it minimizers.
\begin{lemma}\label{refine energy}
Let $ \gamma\in(0,2)$ and $ Q_\gamma$ be the radially symmetry positive solution of (\ref{equ-gamma<2}), then
\begin{align}\label{b3}
\tilde{e}_a(\gamma)= (1-\frac{2}{\gamma})(\frac{4-\gamma}{\gamma})
\Big(\frac{a}{\|Q_\gamma\|_2^2}\Big)^\frac{2}{2-\gamma},
\end{align}
and the  positive minimizers of $\tilde{e}_a(\gamma)$ must be of the form
\begin{align}\label{b4}
\tilde{Q}_\gamma(x)=\frac{\tau_\gamma^{N/2}}{\|Q_\gamma\|_2}Q_\gamma(\tau_\gamma x),~ where~ \tau_\gamma=\sqrt{\frac{\gamma}{4-\gamma}}
\Big(\frac{a}{\|Q_\gamma\|_2^2}\Big)^\frac{1}{2-\gamma}.
\end{align}
\end{lemma}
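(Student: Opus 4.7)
The plan is to reduce the minimization to a one-parameter scalar optimization by exploiting the sharp Gagliardo--Nirenberg inequality \eqref{GN} and the Pohozaev identity \eqref{Pohazev-identity}. For any admissible $u$ with $\|u\|_2^2=1$, I first plug \eqref{GN} into $\tilde{E}_\gamma(u)$ to obtain
\begin{equation*}
\tilde{E}_\gamma(u)\geq \|\nabla u\|_2^2-\frac{aC_\gamma}{2}\|\nabla u\|_2^\gamma=:f(\|\nabla u\|_2),
\end{equation*}
so the problem is bounded below by the minimum of the one-variable function $f(t)=t^2-\tfrac{aC_\gamma}{2}t^\gamma$ on $(0,\infty)$. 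Since $\gamma<2$, the critical point $t_\ast^{2-\gamma}=\tfrac{aC_\gamma\gamma}{4}$ is a global minimizer of $f$. Substituting the explicit value $C_\gamma=\tfrac{4}{4-\gamma}(\tfrac{4-\gamma}{\gamma})^{\gamma/2}\|Q_\gamma\|_2^{-2}$ and simplifying $f(t_\ast)$ should yield exactly the right-hand side of \eqref{b3}.

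Next I verify sharpness by evaluating $\tilde{E}_\gamma$ on the candidate $\tilde Q_\gamma$. The scaling identities
\begin{equation*}
\|\tilde Q_\gamma\|_2^2=\frac{1}{\|Q_\gamma\|_2^2}\|Q_\gamma\|_2^2=1,\qquad \|\nabla\tilde Q_\gamma\|_2^2=\frac{\tau_\gamma^2}{\|Q_\gamma\|_2^2}\|\nabla Q_\gamma\|_2^2,\qquad D_\gamma(\tilde Q_\gamma,\tilde Q_\gamma)=\frac{\tau_\gamma^\gamma}{\|Q_\gamma\|_2^4}D_\gamma(Q_\gamma,Q_\gamma)
\end{equation*}
are immediate from a change of variables. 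Using \eqref{Pohazev-identity} to express $\|\nabla Q_\gamma\|_2^2$ and $D_\gamma(Q_\gamma,Q_\gamma)$ in terms of $\|Q_\gamma\|_2^2$, and the chosen value $\tau_\gamma=\sqrt{\gamma/(4-\gamma)}(a/\|Q_\gamma\|_2^2)^{1/(2-\gamma)}$, I should find that $\|\nabla\tilde Q_\gamma\|_2=t_\ast$ and that $\tilde Q_\gamma$ saturates \eqref{GN} (because $Q_\gamma$ is an optimizer of the GN inequality and the rescaling in \eqref{b4} preserves equality). Then $\tilde E_\gamma(\tilde Q_\gamma)=f(t_\ast)$, matching the claimed value \eqref{b3}.

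Finally, for the characterization \eqref{b4} of positive minimizers, I note that any positive $u$ with $\|u\|_2^2=1$ attaining $\tilde e_a(\gamma)$ must (i) saturate the GN inequality \eqref{GN}, forcing $u$ to be, up to translation and dilation, a positive ground state of \eqref{equ-gamma<2} (so a rescaling of $Q_\gamma$), and (ii) have $\|\nabla u\|_2=t_\ast$, which fixes the dilation parameter to $\tau_\gamma$. Since all positive ground states of \eqref{equ-gamma<2} share the same $L^2$-norm (as noted after Theorem~\ref{strong convergence theorem}), the normalization constant is also determined, yielding the stated form of $\tilde Q_\gamma$.

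The main obstacle is bookkeeping: making sure the three exponents $\tau_\gamma^{N/2}$, $\tau_\gamma^2$, and $\tau_\gamma^\gamma$ combine with the Pohozaev relations $\|\nabla Q_\gamma\|_2^2=\tfrac{\gamma}{4-\gamma}\|Q_\gamma\|_2^2$ and $D_\gamma(Q_\gamma,Q_\gamma)=\tfrac{4}{4-\gamma}\|Q_\gamma\|_2^2$ to reproduce both the critical-point equation $t_\ast^{2-\gamma}=aC_\gamma\gamma/4$ and the precise constant $(1-\tfrac{2}{\gamma})(\tfrac{4-\gamma}{\gamma})$ in \eqref{b3}; aside from this algebra, the argument is a standard sharp-constant reduction.
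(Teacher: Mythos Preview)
Your approach is correct and essentially identical to the paper's: both arguments apply the sharp Gagliardo--Nirenberg inequality \eqref{GN} to reduce the lower bound for $\tilde E_\gamma$ to a one-variable minimization (you use $t=\|\nabla u\|_2$, the paper uses $s=\|\nabla u\|_2^2$), and both obtain the matching upper bound by evaluating on rescalings of $Q_\gamma$ via the Pohozaev relations \eqref{Pohazev-identity}. Your treatment of the characterization \eqref{b4} of \emph{all} positive minimizers --- saturation of \eqref{GN} forces a rescaled ground state, and the condition $\|\nabla u\|_2=t_\ast$ fixes the dilation --- is in fact more explicit than the paper's, which simply records that $\tilde Q_\gamma$ attains the infimum.
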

\begin{proof}
By using the Gagliardo-Nirenberg inequality (\ref{GN}), it follows from (\ref{without potential functional}) that
\begin{align}\notag
\tilde{E}_\gamma(u)\geq \int_{\mathbb{R}^N}|\nabla u|^2 dx-\frac{2a}{4-\gamma}\Big(\frac{4-\gamma}{\gamma}\Big)
^\frac{\gamma}{2}\frac{1}{\|Q_\gamma\|_2^2}\Big( \int_{\mathbb{R}^N}|\nabla u|^2 dx\Big)^{\frac{\gamma}{2}},
\end{align}
for any $u\in H^1(\mathbb{R}^N)$ and $\int_{\mathbb{R}^N}u^2=1 $.

Let
\begin{align}\label{b5}
g(s)=s-\frac{2a}{4-\gamma}\Big(\frac{4-\gamma}{\gamma}\Big)^\frac{\gamma}{2}
\frac{1}{\|Q_\gamma\|_2^2}s^{\frac{\gamma}{2}}~for~any~s\in[0,\infty).
\end{align}
We known that $g(s)$ attains its minimum at $s=(\frac{\gamma}{4-\gamma})(\frac{a}{\|Q_\gamma\|_2^2})^{\frac{2}{2-\gamma}}$, i.e. $s=\tau_\gamma ^2$,
which then implies that
\begin{align*}
\tilde{E}_\gamma(u)\geq g(\tau_\gamma ^2)=\Big(1-\frac{2}{\gamma}\Big)\Big(\frac{\gamma}{4-\gamma}\Big)
\Big(\frac{a}{\|Q_\gamma\|_2^2}\Big)^\frac{2}{2-\gamma}.
\end{align*}
This yields that
\begin{align}\label{b6}
\tilde{e}_a(\gamma)\geq g(\tau_\gamma ^2)=\Big(1-\frac{2}{\gamma}\Big)\Big(\frac{\gamma}{4-\gamma}\Big)
\Big(\frac{a}{\|Q_\gamma\|_2^2}\Big)^\frac{2}{2-\gamma}.
\end{align}

On the other hand, we introduce the following trial function
$$\psi_\gamma ^t(x)=\frac{t^{\frac{N}{2}}}{\|Q_\gamma \|_2}Q_\gamma (tx)$$
and $\int_{\mathbb{R}^N}|\psi_\gamma ^t|^2dx\equiv 1$ for all $t\in(0,\infty)$. We then obtain from (\ref{Pohazev-identity}) that
\begin{align*}
\tilde{e}_a(\gamma)\leq \tilde{E}_\gamma(\psi_\gamma ^t)
=\frac{\gamma}{4-\gamma}t^2-\frac{a}{2}\Big(\frac{4}{4-\gamma}\Big)
\frac{t^\gamma}{\|Q_\gamma\|_2^2},~for ~any ~t\in(0,\infty),
\end{align*}
Set $$h(t)=\frac{\gamma}{4-\gamma}t^2-
\frac{a}{2}\Big(\frac{4}{4-\gamma}\Big)\frac{t^\gamma}{\|Q_\gamma\|_2^2}.$$
We then obtain its minimum
$$h_{min}(t)=\Big(1-\frac{2}{\gamma}\Big)\Big(\frac{\gamma}{4-\gamma}\Big)
\Big(\frac{a}{\|Q_\gamma\|_2^2}\Big)^\frac{2}{2-\gamma}.$$
This and (\ref{b6}) then imply the estimate (\ref{b3}). Moreover, $\tilde{e}_a(\gamma)$ is attained at $\tilde{Q}_\gamma(x)
=\frac{\tau_\gamma^{N/2}}{\|Q_\gamma\|_2}Q_\gamma(\tau_\gamma x)$, and the proof is complete.
\end{proof}
\begin{remark}\label{remark}
For any fixed $a> a^\ast $, from the Theorem \ref{strong convergence theorem} we know that there exists a constant $\sigma > 1$, independent of $\gamma$, such that $\frac{a}{\|Q_\gamma\|_2^2}>\sigma>1$ as $\gamma$ is sufficiently close to $2^-$. Therefore, we further have
\begin{align}\label{b7}
\tau_\gamma=\sqrt{\frac{\gamma}{4-\gamma}}
\Big(\frac{a}{\|Q_\gamma\|_2^2}\Big)^\frac{1}{2-\gamma}\rightarrow +\infty
~and~\tilde{e}_a(\gamma)\rightarrow -\infty ~as~\gamma\nearrow 2.
\end{align}
\end{remark}
By applying Lemma \ref{refine energy}, we are able to establish the following estimates.
\begin{lemma}\label{energy relation}
 Let $a> a^\ast $ be fixed, and suppose that
$$V(x)\in L_{loc}^\infty(\mathbb{R}^N),~\lim_{|x|\rightarrow \infty}V(x)=\infty~and~ \inf_{x\in \mathbb{R}^N}V(x)=0.$$
Then
\begin{align}\label{b8}
e_a(\gamma)-\tilde{e}_a(\gamma)\rightarrow 0,~as~\gamma\nearrow 2.
\end{align}
Furthermore, we have
\begin{align}\label{b9}
\int_{\mathbb{R}^N}V(x)|u_\gamma(x)|^2dx\rightarrow 0 ~as~\gamma \nearrow 2,
\end{align}
and there exist two positive constants $C_1$ and  $C_2$ independent of $\gamma$, such that
\begin{align}\label{b15}
C_1\Big(\frac{a}{\|Q_\gamma\|_2^2}\Big)^\frac{2}{2-\gamma}&\leq \int_{\mathbb{R}^N}|\nabla u_\gamma (x)|^2dx
\leq C_2\Big(\frac{a}{\|Q_\gamma\|_2^2}\Big)^\frac{2}{2-\gamma}~as~\gamma\nearrow2,
\end{align}
\begin{align*}
C_1\Big(\frac{a}{\|Q_\gamma\|_2^2}\Big)^\frac{2}{2-\gamma}\leq& D_\gamma(u_\gamma,u_\gamma)
\leq C_2\Big(\frac{a}{\|Q_\gamma\|_2^2}\Big)^\frac{2}{2-\gamma}~as~\gamma \nearrow 2,
\end{align*}
where $u_\gamma(x)$ is a positive minimizer of (\ref{min}).
\end{lemma}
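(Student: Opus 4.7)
The plan is to prove the three assertions in sequence, using Lemma \ref{refine energy} as the main engine. The natural scale $\tau_\gamma^2=(\gamma/(4-\gamma))(a/\|Q_\gamma\|_2^2)^{2/(2-\gamma)}$ satisfies both $\tau_\gamma\to+\infty$ and $(2-\gamma)\tau_\gamma^2=\gamma|\tilde e_a(\gamma)|\to+\infty$ as $\gamma\nearrow 2$, by Remark \ref{remark}.

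For \eqref{b8}, the lower bound $e_a(\gamma)\ge\tilde e_a(\gamma)$ is immediate from $V\ge 0$. For the matching upper bound I would use a concentrated trial function: fix $y_0\in\mathbb{R}^N$ with $V(y_0)=\inf V=0$, pick a smooth cutoff $\eta$ equal to $1$ on $B_1(0)$ and supported in $B_2(0)$, and set $\psi_\gamma(x):=\eta(x-y_0)\tilde Q_\gamma(x-y_0)/\|\eta(\cdot-y_0)\tilde Q_\gamma(\cdot-y_0)\|_2$, where $\tilde Q_\gamma$ is from Lemma \ref{refine energy}. The $\gamma$-uniform exponential decay \eqref{decay exponentially} together with $\tau_\gamma\to\infty$ makes both the renormalization and the cutoff contribute only $O(e^{-c\tau_\gamma})$-errors, so $\tilde E_\gamma(\psi_\gamma)=\tilde e_a(\gamma)+o(1)$. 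After the change of variables $z=\tau_\gamma(x-y_0)$ the potential term becomes $\int V(y_0+z/\tau_\gamma)|Q_\gamma(z)|^2\,dz/\|Q_\gamma\|_2^2$, which vanishes in the limit because $V$ is locally bounded with $V(y_0)=0$, $Q_\gamma$ has uniform exponential decay, and $\|Q_\gamma\|_2\to\|Q_2\|_2$ by Theorem \ref{strong convergence theorem}. Hence $e_a(\gamma)\le\tilde e_a(\gamma)+o(1)$, proving \eqref{b8}. Statement \eqref{b9} is then immediate: since $\tilde E_\gamma(u_\gamma)\ge\tilde e_a(\gamma)$, we have $0\le\int V|u_\gamma|^2=e_a(\gamma)-\tilde E_\gamma(u_\gamma)\le e_a(\gamma)-\tilde e_a(\gamma)\to 0$.

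For the two-sided bounds \eqref{b15} I would combine \eqref{b9} with the Gagliardo--Nirenberg step from Lemma \ref{refine energy}. Since $\tilde E_\gamma(u_\gamma)=e_a(\gamma)-\int V|u_\gamma|^2=\tilde e_a(\gamma)+o(1)$, the same argument that established \eqref{b6} yields $g(s_\gamma)\le\tilde E_\gamma(u_\gamma)$ with $s_\gamma:=\int|\nabla u_\gamma|^2$ and $g$ the strictly convex function in \eqref{b5} whose unique minimum is $g(\tau_\gamma^2)=\tilde e_a(\gamma)$. Writing $s_\gamma=\kappa_\gamma\tau_\gamma^2$ this reduces to
\begin{equation*}
f_\gamma(\kappa_\gamma)-f_\gamma(1)\le o(1)/\tau_\gamma^2,\qquad f_\gamma(\kappa):=\kappa-(2/\gamma)\kappa^{\gamma/2}.
\end{equation*}
The linear growth $f_\gamma(\kappa)\sim\kappa$ at infinity excludes $\kappa_\gamma\to\infty$; the value $f_\gamma(0)-f_\gamma(1)=(2-\gamma)/\gamma$ together with $(2-\gamma)\tau_\gamma^2\to\infty$ excludes $\kappa_\gamma\to 0$; and on any bounded range a Taylor expansion near $\kappa=1$ gives $f_\gamma(\kappa)-f_\gamma(1)\ge c(2-\gamma)(\kappa-1)^2$, hence $(\kappa_\gamma-1)^2\lesssim o(1)/((2-\gamma)\tau_\gamma^2)\to 0$ and $\kappa_\gamma\to 1$. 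This yields the bounds on $\int|\nabla u_\gamma|^2$; the bounds on $D_\gamma(u_\gamma,u_\gamma)$ then follow from $(a/2)D_\gamma(u_\gamma,u_\gamma)=s_\gamma-\tilde E_\gamma(u_\gamma)=(1+o(1))\tau_\gamma^2$.

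The main obstacle is this last step, because $g$ degenerates as $\gamma\nearrow 2$: its curvature at the minimum decays like $(2-\gamma)$, so an $o(1)$ excess of energy a priori allows $s_\gamma$ to drift from $\tau_\gamma^2$ by a large relative amount. The identity $(2-\gamma)\tau_\gamma^2=\gamma|\tilde e_a(\gamma)|\to\infty$, a direct consequence of $a>a^*$ and $\|Q_\gamma\|_2\to\|Q_2\|_2$ via Theorem \ref{strong convergence theorem}, is precisely what rules out such drift and forces $\kappa_\gamma\to 1$.
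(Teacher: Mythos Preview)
Your argument for \eqref{b8} and \eqref{b9} is the same as the paper's: a cut-off of the explicit minimizer $\tilde Q_\gamma$ from Lemma~\ref{refine energy} centered at a zero of $V$, with the uniform exponential decay \eqref{decay exponentially} controlling all truncation errors and dominated convergence killing the potential term; \eqref{b9} then drops out of $\tilde E_\gamma(u_\gamma)\ge\tilde e_a(\gamma)$.

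For \eqref{b15} the paper simply writes that the proof ``is standard, and therefore we omit it,'' so your write-up adds content rather than duplicating it. Your reduction to the scalar inequality $f_\gamma(\kappa_\gamma)-f_\gamma(1)\le o(1)/\tau_\gamma^2$ with $f_\gamma(\kappa)=\kappa-(2/\gamma)\kappa^{\gamma/2}$, together with the key observation $(2-\gamma)\tau_\gamma^2\to\infty$, is exactly the device the paper deploys later (Step~3 in the proof of Theorem~\ref{concentration theorem}) to pin down $\beta=1$; you are effectively anticipating that computation, and in fact you obtain the sharper conclusion $\kappa_\gamma\to 1$ rather than mere two-sided bounds.

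One small point of presentation: the phrase ``linear growth $f_\gamma(\kappa)\sim\kappa$ at infinity'' does not by itself exclude $\kappa_\gamma\to\infty$, because this coercivity is not uniform as $\gamma\nearrow 2$ (for fixed $\kappa$, $f_\gamma(\kappa)-f_\gamma(1)\to 0$). The clean way is to note that $f_\gamma$ is strictly increasing on $(1,\infty)$ and strictly decreasing on $(0,1)$, and that your Taylor/convexity bound
\[
f_\gamma(\kappa)-f_\gamma(1)\ \ge\ c_M\,(2-\gamma)(\kappa-1)^2\qquad\text{on }[M^{-1},M]
\]
(which follows from $f_\gamma''(\kappa)=(1-\gamma/2)\kappa^{\gamma/2-2}\ge c_M(2-\gamma)$ there) already rules out $\kappa_\gamma\ge M$ or $\kappa_\gamma\le M^{-1}$ for every $M>1$, by monotonicity and $(2-\gamma)\tau_\gamma^2\to\infty$. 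With that adjustment the argument is complete, and the $D_\gamma$ bounds follow as you indicate from $(a/2)D_\gamma(u_\gamma,u_\gamma)=s_\gamma-\tilde E_\gamma(u_\gamma)$.
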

\begin{proof}
We choose a suitable trial function to estimate the upper bound of $e_a(\gamma)-\tilde{e}_a(\gamma)$.
For $R\geq 0$ fixed, let $\varphi_R(x)\in \mathcal{C}_0^\infty (\mathbb{R}^N)$ be a cut-off function such that
$\varphi_R(x)\equiv 1$ if $x\in B_R(0)$, $\varphi_R(x)\equiv 0$ if $x\in B_{2R}^c(0)$, and $0\leq \varphi_R(x)\leq 1$,
$\nabla \varphi (x)\leq \frac{C_0}{R}$ for any  $x\in B_{2R}(0)\setminus B_R(0)$. Set
\begin{align}\label{b10}
\omega_{R,\gamma}(x)=A_{R,\gamma}\tilde{\omega}_{R,\gamma}(x)=A_{R,\gamma} \varphi_R(x-x_0) \tilde{Q}_\gamma(x-x_0)~with~x_0\in\mathbb{R}^N,
\end{align}
where $\tilde{Q}_\gamma(x)$ defined in (\ref{b4}) is the positive minimizer of $\tilde{e}_a(\gamma)$, and $A_{R,\gamma}>0$ is chosen so that $\|\omega_{R,\gamma}\|_2^2=1$.
Now, we can calculate that
\begin{align*}
0\leq& e_a(\gamma)-\tilde{e}_a(\gamma)\leq E_\gamma (\omega_{R,\gamma}(x))-\tilde{e}_a(\gamma)\notag\\
=&E_\gamma( A_{R,\gamma}\tilde{\omega}_{R,\gamma}(x))
-\tilde{E}_\gamma(\tilde{\omega}_{R,\gamma}(x))+
\tilde{E}_\gamma(\tilde{\omega}_{R,\gamma}(x))
-\tilde{E}_\gamma(\tilde{Q}_\gamma(x))\notag\\
\leq& (A_{R,\gamma}^2-1)\int_{\mathbb{R}^N}|\nabla \tilde{\omega}_{R,\gamma}(x)|^2dx+
\frac{a}{2}(A_{R,\gamma}^4-1)D_\gamma(\tilde{\omega}_{R,\gamma}, \tilde{\omega}_{R,\gamma})\notag\\
&+\int_{\mathbb{R}^N}V(x)|\omega_{R,\gamma}(x)|^2dx+\Big|\int_{\mathbb{R}^N}|\nabla \tilde{Q}_\gamma(x)|^2dx-
\int_{\mathbb{R}^N}|\nabla \tilde{\omega}_{R,\gamma}(x)|^2dx\Big|\notag\\
&+ \frac{a}{2}\big|D_\gamma(\tilde{Q}_\gamma,\tilde{Q}_\gamma)
-D_\gamma(\tilde{\omega}_{R,\gamma},\tilde{\omega}_{R,\gamma})\big|\notag\\
=&A_1+A_2+A_3+A_4+A_5.
\end{align*}
By using \eqref{decay exponentially} and  $\tau_\gamma \rightarrow \infty$ as $\gamma\nearrow 2$, we then have
$$0\leq A_{R,\gamma}^2-1\leq \frac{\int_{B^c_{R_{\tau_\gamma}}} |Q_\gamma (x)|^2dx}{\int_{B_{R_{\tau_\gamma}}} |Q_\gamma (x)|^2dx}\leq
CR_{\tau_{\gamma}}e^{-2\delta R_{\tau_{\gamma}}}\leq Ce^{-\delta R_{\tau_\gamma}} ~as~\gamma\nearrow2,$$
where $\delta >0$ is as in \eqref{decay exponentially}. It hence follows from the above that
\begin{align*}
1\leq A_{R,\gamma}^4 \leq (1+ Ce^{-\delta R_{\tau_\gamma}})^2\leq 1+4 Ce^{-\delta R_{\tau_\gamma}}.
\end{align*}
Direct calculations show that
\begin{align*}
A_4&\leq \big| \int_{\mathbb{R}^N}|\nabla \tilde{Q}_\gamma(x)|^2dx-\int_{\mathbb{R}^N}(|\nabla \varphi_R|^2|\tilde{Q}_\gamma|^2+
|\varphi_R|^2|\nabla\tilde{Q}_\gamma|^2+2\nabla \varphi_R \varphi_R\nabla\tilde{Q}_\gamma \tilde{Q}_\gamma)dx\big|\notag\\
&\leq \int_{B_R^c}|\nabla \tilde{Q}_\gamma(x)|^2dx+\frac{C}{R^2}\int_{B_R^c}|\tilde{Q}_\gamma|^2dx
+\frac{2C}{R}\int_{B_R^c}|\nabla\tilde{Q}_\gamma|| \tilde{Q}_\gamma|dx
\leq Ce^{-\delta R_{\tau_\gamma}},
\end{align*}
where we use \eqref{decay exponentially}. One can also calculate that
\begin{align*}
A_5\leq Ce^{-\delta R_{\tau_\gamma}}.
\end{align*}
Moreover, we have
\begin{align}\notag
\lim_{\gamma\nearrow 2}\int_{\mathbb{R}^N}V(x)|\omega_{R,\gamma}|^2dx
=\lim_{\gamma\nearrow 2}\frac{A_{R,\gamma}^2}{\|Q_\gamma\|_2^2}\int V(\frac{x}{\tau_\gamma}+x_0)\varphi _R^2(\frac{x}{\tau_\gamma})Q_\gamma^2(x)dx=V(x_0)
\end{align}
holds for almost every $x_0\in \mathbb{R}^N$. Therefore, we choose $x_0\in \mathbb{R}^N$ such that $V(x_0)=0$, it follows from the above estimate that
\begin{align}\label{b14}
0\leq e_a(\gamma)-\tilde{e}_a(\gamma) \leq Ce^{-\delta R_{\tau_\gamma}}
+\int_{\mathbb{R}^N}V(x)|\omega_{R,\gamma}|^2dx\rightarrow0~as~\gamma\nearrow 2,
\end{align}
which then implies (\ref{b8}). Therefore, from \eqref{b8}, we can obtain \eqref{b9}. Nowadays the proof of \eqref{b15} is standard, and therefore we omit it.
\end{proof}

\section{Concentration  and symmetry breaking}
This section is devoted to proving Theorem \ref{concentration theorem} and Theorem \ref{theorem3} on the concentration and symmetry breaking of minimizers for (\ref{min}) as $\gamma \nearrow 2$, where $a>\|Q_2\|_2^2 $  is fixed. Set
\begin{align}\label{con-1}
\varepsilon _\gamma :=\varepsilon (\gamma)=\Big(\frac{a}{\|Q_\gamma\|_2^2}\Big)^{-\frac{1}{2-\gamma}}> 0,
\end{align}
then $\varepsilon _\gamma \rightarrow 0$ by Remark \ref{remark}. Define the $L^2(\mathbb{R}^N)$-normalized function
\begin{align}\label{con-2}
\tilde{v}_\gamma(x):=\varepsilon_\gamma^{\frac{N}{2}}u_\gamma(\varepsilon _\gamma x).
\end{align}
It then follows from Lemma \ref{energy relation} that there exist two positive constants $C_1$ and $C_2$, independent of $\gamma$, such that
\begin{align}\label{c3}
C_1\leq\int_{\mathbb{R}^N}|\nabla \tilde{v}_\gamma (x) |^2\leq C_2~~and~~
C_1\leq D_\gamma(\tilde{v}_\gamma,\tilde{v}_\gamma)\leq C_2.
\end{align}

By the above inequality \eqref{c3}, we easily obtain that
there exist a sequence $\{y_{\varepsilon _\gamma}\}$, $R_0>0$ and $\eta>0$ such that
\begin{align}\notag
\liminf_{\varepsilon _\gamma \rightarrow 0}\int_{B_{R_0}
(y_{\varepsilon _\gamma})}|\tilde{v}_\gamma (x) |^2dx\geq \eta > 0,
\end{align}
where $\tilde{v}_\gamma (x) $ is defined as (\ref{con-2}). For the sequence $\{y_{\varepsilon_\gamma}\}$ given by above, set
\begin{align}\label{c5}
v_\gamma (x)=\tilde{v}_\gamma(x+y_{\varepsilon _\gamma})
=\varepsilon_\gamma^{\frac{N}{2}}u_\gamma(\varepsilon_\gamma(x+y_{\varepsilon _\gamma})).
\end{align}
Then
\begin{align}\label{c6}
\liminf_{\varepsilon _\gamma \rightarrow 0}\int_{B_{R_0}(0)}|v_\gamma (x)|^2dx \geq \eta > 0,
\end{align}
which therefore implies that $v_\gamma(x)$ cannot vanish as $\gamma\nearrow2$.
We shall need the following technical result, proof of which can be found in \cite{GZ2014}.\vspace{-0.3cm}
\begin{lemma}\label{lemma4-1}
Assume $V(x)\in C^1(\mathbb{R}^N)$ satisfies $ \lim_{|x|\rightarrow \infty} V(x)=\infty$ and $\inf_{x\in \mathbb{R}^N}V(x)=0$. Then $\{\varepsilon _\gamma y_{\varepsilon _\gamma}\} $ is bounded uniformly for $\gamma \nearrow 2 $. Moreover, for any sequence $\{\gamma_k\}$ with $ \gamma_k \rightarrow 2 $ as $k\rightarrow \infty$, there exists a subsequence, still denoted by $\{\gamma_k\}$, such that
$z_k:= \varepsilon_k y_{\varepsilon_k}\rightarrow ^k y_0$, where $\varepsilon_k:=\varepsilon_{\gamma _k}$ is given by (\ref{con-1}), and
$y_0\in \mathbb{R}^N$ is a global minimum point of $V(x)$, i.e., $V(y_0)=0$.
\end{lemma}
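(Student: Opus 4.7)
The plan is to leverage the energy estimate \eqref{b9} together with the non-vanishing property \eqref{c6}, using the coercivity of $V$ at infinity as the driving force. The $C^1$ regularity hypothesis plays no role in this particular lemma; only continuity and coercivity enter, and the smoothness is reserved for later blow-up arguments.

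First I would rewrite the conclusion \eqref{b9} in the rescaled coordinates. By the change of variables $y=\varepsilon_\gamma(x+y_{\varepsilon_\gamma})$ and the definition \eqref{c5} of $v_\gamma$,
\begin{align*}
\int_{\mathbb{R}^N} V(y)|u_\gamma(y)|^2\,dy = \int_{\mathbb{R}^N} V(\varepsilon_\gamma x + \varepsilon_\gamma y_{\varepsilon_\gamma})|v_\gamma(x)|^2\,dx \longrightarrow 0 \quad \text{as } \gamma\nearrow 2.
\end{align*}

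Next I would establish that $z_\gamma := \varepsilon_\gamma y_{\varepsilon_\gamma}$ is uniformly bounded, by contradiction. Suppose along some subsequence $\gamma_k \nearrow 2$ we had $|z_k| \to \infty$. Since $\varepsilon_k \to 0$ by Remark \ref{remark}, for every $x \in B_{R_0}(0)$ one has $|\varepsilon_k x + z_k| \geq |z_k| - \varepsilon_k R_0 \to \infty$. The coercivity $V(y) \to \infty$ as $|y|\to\infty$ then forces $\inf_{x\in B_{R_0}(0)} V(\varepsilon_k x + z_k) \to \infty$, so combining with \eqref{c6} gives
\begin{align*}
\int_{\mathbb{R}^N} V(\varepsilon_k x + z_k)|v_k(x)|^2\,dx \;\geq\; \Big(\inf_{x\in B_{R_0}(0)} V(\varepsilon_k x + z_k)\Big)\int_{B_{R_0}(0)}|v_k|^2\,dx \;\geq\; M\eta/2
\end{align*}
for any prescribed $M>0$ and all $k$ large enough, contradicting the previous display. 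Hence $\{z_\gamma\}$ is bounded, and along any sequence $\gamma_k \nearrow 2$ a subsequence satisfies $z_k \to y_0$ in $\mathbb{R}^N$.

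Finally I would verify $V(y_0)=0$. Since $\varepsilon_k x + z_k \to y_0$ uniformly on the compact set $\overline{B_{R_0}(0)}$, continuity of $V$ yields $V(\varepsilon_k x + z_k) \to V(y_0)$ uniformly there, so
\begin{align*}
o(1) = \int_{\mathbb{R}^N} V(\varepsilon_k x + z_k)|v_k|^2\,dx \;\geq\; \big(V(y_0) - o(1)\big)\int_{B_{R_0}(0)}|v_k|^2\,dx \;\geq\; \big(V(y_0) - o(1)\big)\eta.
\end{align*}
Letting $k \to \infty$ forces $V(y_0)\leq 0$, and since $V\geq 0$ with $\inf V = 0$, we conclude $V(y_0)=0$, i.e., $y_0$ is a global minimum point of $V$. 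The one place requiring care is setting up the contradiction cleanly in the second paragraph: one must separate the scales so that the translation $z_k$ is what escapes to infinity while the dilation $\varepsilon_k x$ of the test ball remains negligible; once that split is in place, coercivity does all the work and the remaining steps reduce to uniform continuity on compact sets.
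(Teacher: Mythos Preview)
Your argument is correct and is precisely the standard one: combine the vanishing potential energy \eqref{b9} with the non-vanishing lower bound \eqref{c6} and the coercivity of $V$ to force boundedness of $z_\gamma$, then use continuity on a compact ball to identify the limit as a zero of $V$. The paper does not supply its own proof of this lemma but defers to \cite{GZ2014}, where exactly this approach is carried out, so your proposal coincides with the intended argument.
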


Since $u_\gamma$ is a minimizer of (\ref{min}), it satisfies the Euler-Lagrange equation
\begin{equation}\label{equ-Eular-1}
-\Delta u_\gamma (x)+V(x)u_\gamma(x)=\mu_\gamma u_\gamma(x)+a(\frac{1}{|x|^\gamma}\ast |u_\gamma|^2)u_\gamma(x)~in~\mathbb{R}^N,
\end{equation}
where $\mu_\gamma \in \mathbb{R}$ is a Lagrange multiplier and satisfies
\begin{align}\notag
\mu _\gamma=e_a(\gamma)-\frac{a}{2}D_\gamma(u_\gamma,u_\gamma).
\end{align}
It then follows from Lemma \ref{energy relation} and (\ref{c3}) that there exist two positive constants $C_1$ and $C_2$, independent of $\gamma$, such that
\begin{align*}
-C_2< \mu_\gamma \varepsilon_\gamma ^2< -C_1~as~\gamma\nearrow 2 .
\end{align*}
By (\ref{con-1}) and (\ref{equ-Eular-1}), $v_\gamma(x)$ satisfies
\begin{equation}\label{equ-Eular-2}
-\Delta v_\gamma (x) +\varepsilon_\gamma^2V(\varepsilon_\gamma (x+y_{\varepsilon_\gamma}))v_\gamma (x)
=\varepsilon_\gamma^2\mu_\gamma v_\gamma (x)+\|Q_\gamma\|_2^2(\frac{1}{|x|^\gamma}*|v_\gamma|^2)v_\gamma (x)~in~\mathbb{R}^N.
\end{equation}
Therefore, by passing to a subsequence if necessary, we can assume that, for some number $\beta >0$,
$$\mu_{\gamma_k}\varepsilon_k^2\rightarrow-\beta^2<0~and~v_k:
=v_{\gamma_k}\rightharpoonup v_0\geq 0~in~H^1(\mathbb{R}^N) ~as~\gamma_k\nearrow 2,$$
for some $v_0 \in H^1(\mathbb{R}^N)$. By passing to the weak limit of (\ref{equ-Eular-2}), we deduce from Lemma \ref{energy relation} and Theorem \ref{strong convergence theorem} that the non-negative function $v_0$ satisfies
\begin{equation}\label{equ-Eular-limit}
-\Delta v_0(x)=-\beta^2v_0(x)+a^*(\frac{1}{|x|^2}*|v_0|^2)v_0 (x)~in ~\mathbb{R}^N.
\end{equation}
Furthermore, we infer from (\ref{c6}) that $v_0\not\equiv0$ in $\mathbb{R}^N $, and the strong maximum principle then yields that $v_0>0$ in $\mathbb{R}^N $. By the simple rescaling, we thus conclude from the positive  ground state of (\ref{equ-gamma=2}) that
\begin{align}\label{c8}
v_0=\frac{\beta^{\frac{N}{2}}}{\|Q_0\|_2}Q_0(\beta |x-x_0|)~for~some~x_0 \in \mathbb{R}^N,
\end{align}
where $Q_0$ is the positive radially symmetric solution of equation \eqref{equ-gamma=2}. Since $J_2(Q_0)\geq J_2(Q_2)=I_2$, where $I_2$ is the ground state energy, and $J_2(Q_0)=\frac{1}{2}|\|Q_0\|_2^2$, then we have $\|Q_0\|_2^2\geq a^*$. Therefore $\int_{\mathbb{R}^N}|v_0(x)|^2dx\geq1$. On the other hand, it follows from the Fatou's lemma that $\int_{\mathbb{R}^N}|v_0(x)|^2dx\leq1$. Then $\int_{\mathbb{R}^N}|v_0(x)|^2dx=1$, which implies that $\|Q_0\|_2=\|Q_2\|_2=a^*$. Thus, $Q_0$ is a positive radially symmetric ground state of equation \eqref{equ-gamma=2}.
Note that $\|v_k\|_2^2=1$, then $v_k$ converges to $v_0$ strongly in $L^2(\mathbb{R}^N)$ and in fact,
strongly in $L^p(\mathbb{R}^N)$ for any $2\leq p<\frac{2N}{N-2}$ because of $H^1(\mathbb{R}^N)$ boundedness. Furthermore, since  $v_k$ and  $v_0$ satisfy (\ref{equ-Eular-2}) and (\ref{equ-Eular-limit}) respectively, standard elliptic regularity theory gives that  $v_k$  converges to $v_0$ strongly in $H^1(\mathbb{R}^N)$.
\renewcommand{\proofname}{\bf Proof of Theorem \ref{concentration theorem}}
\begin{proof}
Motivated by \cite{GZ2014,WZ2017}, we are now ready to complete the proof of Theorem 1.1 by the following three steps.
\par $\mathbf{Step 1:}$ The decay property of $u_k:=u_{\gamma_k}$. For any sequence $\{\gamma_k\}$. Let $v_k:=v_{\gamma_k}\geq 0$ be defined by (\ref{c5}).
The above analysis shows that there exists a subsequence, still dented by $\{v_k\}$, satisfying (\ref{equ-Eular-2}) and $v_k\rightarrow v_0$ strongly in $H^1(\mathbb{R}^N)$ for some positive function $v_0$. Hence for any $2<\alpha<\frac{2N}{N-2}$,
\begin{align*}
\int_{|x|\geq R}|v_k|^\alpha dx\rightarrow0~\text{as $R\rightarrow\infty$ uniformly for large $k$}.
\end{align*}
Since $\mu_{\gamma _k}<0$ , it follows from (\ref{equ-Eular-2}) that
$$-\Delta v_k-c(x)v_k \leq 0 ,~where~c(x)=\|Q_\gamma \|_2^2(\frac{1}{|x|^\gamma }*|v_k |^2).$$
Denote $\phi_{v_k}(x)=\int_{\mathbb{R}^N}\frac{|v_k(y)|^2}{|x-y|^\gamma}dy$. By the Riesz potential inequality, we then have
\begin{align*}
\|\phi_{v_k}(x)\|_{L^q(B_2(\xi))}\leq \|\phi_{v_k}(x)\|_{L^q(\mathbb{R}^N)} \leq C\|v_k^2\|_{L^p(\mathbb{R}^N)}=C\|v_k\|^2_{L^{2p}(\mathbb{R}^N)}
\end{align*}
where $1+\frac{1}{q}=\frac{1}{p}+\frac{\gamma}{N}$. In particular, if $q=\frac{N}{\gamma}>\frac{N}{2}$, then $p=1$. Since $v_k\in H^1(\mathbb{R}^N)$, by the Sobolev embedding theorem, we get
\begin{align*}
\|\phi_{v_k}(x)\|_{L^q(B_2(\xi))}\leq\|v_k\|^2_{L^{2p}(\mathbb{R}^N)}<C<\infty.
\end{align*}
\par Note that $q>\frac{N}{2}$, by applying De Giorgi-Nash-Moser theory (see \cite{HL2011}, Theorem 4.1), we thus have
\begin{align}\label{c10}
\max_{B_1(\xi)}v_k(x) \leq C (\int_{B_2(\xi)}|v_k(x)|^\alpha dx)^{\frac{1}{\alpha}},
\end{align}
where $\xi$ is an arbitrary point in $\mathbb{R}^N $, and $C$ is a constant depending only on the bound of $\|\phi_{v_k}(x)\|_{L^p(B_2(\xi))}$. Hence we deduce from (\ref{c10}) that
\begin{align*}
v_k(x)\rightarrow0~\text{as~$|x|\rightarrow\infty$~uniformly~in~$k$}.
\end{align*}
Since $v_k$ satisfies (\ref{equ-Eular-2}), one can use the comparison principle as in \cite{KW1994} to $v_k$ with $Ce^{-\frac{\beta}{2}|x|}$, which then shows that
there exists a large constant $R>0$, independent of $k$, such that
\begin{align}\label{c11}
v_k(x)\leq Ce^{-\frac{\beta}{2}|x|}~for~|x|>R~as~~k\rightarrow\infty.
\end{align}
By Lemma \ref{lemma4-1}, we therefore obtain from (\ref{c11}) that the subsequence
\begin{align}\notag
u_k(x):=u_{\gamma_k}(x) =\frac{1}{\varepsilon_k^{\frac{N}{2}}}v_k\Big(\frac{x-z_k}{\varepsilon_k}\Big),
\end{align}
decays uniformly to zero for $x$ outside any fixed neighborhood of $y_0$ as $k\rightarrow \infty$, where $\varepsilon_k=\varepsilon_{\gamma_k}$,
$z_k\in \mathbb{R}^N$ is defined as in Lemma \ref{lemma4-1}, and $y_0\in \mathbb{R}^N $ is a global minimum point of $V(x)$.
\par $\mathbf{Step 2:}$ The detailed concentration behavior. Let $\bar{z}_k$ be any local maximum point of $u_k$. By the definition of $\phi_{u_k}(\bar{z}_k)$, we have
\begin{align*}
\phi_{u_k}(\bar{z}_k)=\int_{\mathbb{R}^N}\frac{|u_k(y)|^2}{|\bar{z}_k-y|^\gamma}dy
\leq u_k(\bar{z}_k)^{\frac{1}{N}}
(\int_{|\bar{z}_k-y|<\delta}\frac{|u_k(y)|^{2-\frac{1}{N}}}{|\bar{z}_k-y|^\gamma}dy
+\int_{|\bar{z}_k-y|\geq\delta}\frac{|u_k(y)|^{2-\frac{1}{N}}}{|\bar{z}_k-y|^\gamma}dy\Big).
\end{align*}
Since $0<\gamma<2$, then it follows from the H\"{o}lder inequality that
\begin{align*}
\int_{|\bar{z}_k-y|<\delta}\frac{|u_k(y)|^{2-\frac{1}{N}}}{|\bar{z}_k-y|^\gamma}dy
&\leq\big(\int_{|\bar{z}_k-y|<\delta}|\bar{z}_k-y|^{-\frac{\gamma N}{2}}dy\big)^{\frac{2}{N}}
\big(\int_{|\bar{z}_k-y|<\delta}|u_k(y)|^{2+\frac{3}{N-2}}\big)^{\frac{N-2}{N}}\\
&\leq C<\infty,
\end{align*}
where $C>0$ is independent of $k$, since $u_k$ is uniformly bounded in $H^1(\mathbb{R}^N)$.

On the other hand, combining H\"{o}lder inequality and Sobolev inequality yields that
\begin{align*}
\int_{|\bar{z}_k-y|\geq\delta}\frac{|u_k(y)|^{2-\frac{1}{N}}}{|\bar{z}_k-y|^\gamma}&\leq
\big(\int_{|\bar{z}_k-y|\geq\delta}|\bar{z}_k-y|^{-\frac{(N+1)\gamma}{\gamma}}dy\big)^{\frac{\gamma}{N+1}}
\big(\int_{|\bar{z}_k-y|\geq\delta}|u_k(y)|^{2+\frac{2N\gamma}{N(N+1-\gamma)}}dy\big)
^{\frac{N+1-\gamma}{N+1}}\\&\leq C<\infty,
\end{align*}
where $C>0$ is independent of $k$. According to the above two estimates, we deduce that
\begin{align}\label{c12}
 \phi_{u_k}(\bar{z}_k)\leq C u_k({\bar{z}_k})^{\frac{1}{N}}~as~\gamma\nearrow 2.
\end{align}
We then follows from (\ref{equ-Eular-1}) and (\ref{c12}) that
\begin{align*}
0\leq\mu_\gamma u_\gamma(\bar{z}_k)+\|Q_\gamma\|^2_2\phi_{u_k}(\bar{z}_k)u_\gamma(\bar{z}_k)
\leq\mu_\gamma u_\gamma(\bar{z}_k)+\|Q_\gamma\|^2_2C u_\gamma^{\frac{N+1}{N}}(\bar{z}_k)
~as~\gamma\nearrow2,
\end{align*}
which implies that
\begin{align*}
u_\gamma(\bar{z}_k)\geq\Big(\frac{-\mu_\gamma}{\|Q_\gamma\|^2_2C}\Big)^N\geq C\varepsilon_\gamma^{-2N}.
\end{align*}
This estimate and the above decay property thus imply that $\bar{z}_k\rightarrow y_0$ as $k\rightarrow \infty$. Set
\begin{align}\label{c13}
\tilde{v}_k=\varepsilon_k^{\frac{N}{2}}u_k(\epsilon_kx+\bar{z}_k),
\end{align}
so that $\tilde{v}_k$ satisfies (\ref{c3}). It then follow from (\ref{equ-Eular-2}) that
\begin{align}\label{eq6}
-\Delta \tilde{v} _k (x) +\varepsilon_k^2V(\varepsilon_k x+\bar{z}_k)\tilde{v}_k(x)
=\varepsilon_{\gamma_k}^2\mu_{\gamma_k }\tilde{v}_k(x)
+\|Q_\gamma\|_2^2(\frac{1}{|x|^\gamma}\ast|\tilde{v}_k|^2)\tilde{v}_k (x)~in~\mathbb{R}^N.
\end{align}
The same argument as proving (\ref{equ-Eular-limit}) yields that there exists a subsequence of $\{\tilde{v}_k\}$, still denoted by $\{\tilde{v}_k\}$,
such that $\tilde{v}_k\rightarrow \tilde{v}_0$ as $k\rightarrow\infty$ in $H^1(\mathbb{R}^N)$ for some nonnegative function
$\tilde{v}_0\geq0$, where $\tilde{v}_0$ satisfies (\ref{equ-Eular-limit}) for some constant $\beta>0$. We derive from (\ref{eq6}) that
\begin{align*}
\tilde{v}_k(0)\geq\Big(\frac{-\varepsilon_{\gamma_k}^2\mu_{\gamma_k }}{C\|Q_\gamma \|_2^2}\Big)^N\geq
\Big(\frac{\beta^2}{C\|Q_\gamma \|_2^2}\Big)^N~as~k\rightarrow \infty,
\end{align*}
which implies that $\tilde{v}_0(0)>0$. Thus, the strong maximum principle yields that $\tilde{v}_0(x)>0$ in $\mathbb{R}^N$. Since $x=0$ is a critical point of $\tilde{v}_k$ for all $k>0$, it is also a critical point of $\tilde{v}_0$. We therefore conclude from the positive radial solutions of equation (\ref{equ-gamma=2}) that $\tilde{v}_0$ is spherically symmetric about the origin, and
\begin{align}\label{c14}
\tilde{v}_0=\frac{\beta^{\frac{N}{2}}}{\|Q_2\|_2}Q_2(\beta|x|)~for~some~\beta>0.
\end{align}
$\mathbf{Step 3:}$ The exact value of $\beta$ defined in (\ref{c14}). Let $\{\gamma_k\}$, where $\gamma_k\nearrow 2$ as $k\rightarrow \infty$, be the subsequence obtained in Step 2, and denoted $u_k:=u_{\gamma_k}$. Recall from Lemma \ref{energy relation} that
\begin{align*}
e_a(\gamma_k)&=\tilde{e}_a(\gamma_k)+o(1)
=\Big(1-\frac{2}{\gamma_k}\Big)\Big(\frac{4-\gamma_k}{\gamma_k}\Big) \varepsilon_k^{-2}+o(1)~as~k\rightarrow\infty,
\end{align*}
which yields that
\begin{align}\label{c15}
\lim_{k\rightarrow \infty}\frac{\gamma_k}{2-\gamma_k}\varepsilon_k^2e_a(\gamma_k)
=-\lim_{k\rightarrow \infty}\frac{4-\gamma_k}{\gamma_k} =-1.
\end{align}
 On the other hand,
\begin{align}\label{c16}
e_a(\gamma_k)&=\int_{\mathbb{R}^N}(|\nabla u_{\gamma_k}|^2+V(x)|u_{\gamma_k}|^2)dx
-\frac{a}{2}D_{\gamma_k}(u_{\gamma_k},u_{\gamma_k})\notag\\
&=\varepsilon_k^{-2}\Big(\int_{\mathbb{R}^N}|\nabla \tilde{v}_k|^2dx
-\frac{\|Q_{\gamma_k}\|_2^2}{2}D_{\gamma_k}( \tilde{v}_k, \tilde{v}_k)\Big)
+\int_{\mathbb{R}^N}V(x)| u_{\gamma_k}|^2dx\notag\\
&\geq \varepsilon_k^{-2}\Big(\int_{\mathbb{R}^N}|\nabla \tilde{v}_k|^2dx
-\frac{2}{4-\gamma_k}(\frac{4-\gamma_k}{\gamma_k})^{\frac{\gamma_k}{2}}
(\int_{\mathbb{R}^N}|\nabla \tilde{v}_k|^2dx)^{\frac{\gamma_k}{2}}\Big)
\end{align}
where $\tilde{v}_k:= \tilde{v}_{\gamma_k}$ is as in (\ref{c13}). Set $\beta_{\gamma_k}^2:=\int_{\mathbb{R}^N}|\nabla \tilde{v}_k|^2dx$.
Since $\tilde{v}_k(x)\rightarrow \tilde{v}_0(x)$ strongly in $H^1(\mathbb{R}^N)$, we have
\begin{align}\label{c17}
\lim_{k\rightarrow\infty}\beta_{\gamma_k}^2=\|\nabla\tilde{v}_0\|_2^2=\beta^2,
\end{align}
where (\ref{a4}) is used. Let $f_k(t)=t-\frac{2}{4-\gamma_k}(\frac{4-\gamma_k}{\gamma_k})
^\frac{\gamma_k}{2}t^{\frac{\gamma_k}{2}}$, where $t\in (0,\infty)$. A simple analysis shows that $f_k(\cdot)$ attains its global minimum at the unique point $t_k=\frac{\gamma_k}{4-\gamma_k}$ and also
$f_k(t_k)=\frac{\gamma_k-2}{4-\gamma_k}$. We hence deduce from (\ref{c16}) that
\begin{align*}
\lim_{k\rightarrow \infty}\frac{\gamma_k}{2-\gamma_k}\varepsilon_k^2e_a(\gamma_k)
\geq\lim_{k\rightarrow \infty}\frac{\gamma_k}{2-\gamma_k}f_k(\beta_{\gamma_k}^2)\geq \lim_{k\rightarrow \infty}\frac{\gamma_k}{2-\gamma_k}f_k(t_k) =-1,
\end{align*}
 Combine with (\ref{c15}), it follows that
$$\lim_{k\rightarrow\infty}\frac{f_k(\beta_{\gamma_k}^2)}{f_k({t_k)}}=1.$$
We then obtain that
\begin{align*}
\lim_{k\rightarrow\infty}\beta_{\gamma_k}^2=\lim_{k\rightarrow\infty}t_k=1,
\end{align*}
and therefore we have $\beta=1$ by applying (\ref{c17}), which, together with (\ref{c13}) and (\ref{c14}) give in (\ref{a7}). We thus complete the proof of Theorem \ref{concentration theorem}.
\end{proof}
Following the proof of Theorem \ref{concentration theorem}, we next address Theorem \ref{theorem3} on the local properties of concentration points. Under the assumption (\ref{a8}), we first denoted $$\bar{V}_i(x)=\frac{V(x)}{|x-x_i|^{p_i}},~where~i=1,2,...,n,$$
so that the $\lim_{x\rightarrow _i}\bar{V}_i(x)=V_i(x_i)$ is assumed to exist for all $i=1,2,...,n.$
\renewcommand{\proofname}{\bf Proof of Theorem \ref{theorem3}}
\begin{proof}
For convenience we still denoted $\{\gamma_k\}$ to be the subsequence  obtained in Theorem \ref{concentration theorem}. Choose a point $x_{i_0}\in \mathcal{Z}$, where $\mathcal{Z}$ is defined by (\ref{a9}), and let
$$\omega_{R,\gamma_k}(x)=A_{R,\gamma_k} \varphi_R(x-x_{i_0}) \tilde{Q}_{\gamma_k}(x-x_{i_0})$$
be the trial function defined by (\ref{b10}). By (\ref{b14}), we know that
\begin{align}\label{c18}
&e_a(\gamma_k)-\tilde{e}_a(\gamma_k)\notag\\
&\leq \int_{\mathbb{R}^N}V(x)|A_{R,\gamma_k}(x)|^2dx+Ce^{-\delta R_{\tau_{\gamma_k}}}\notag\\
&\leq\frac{A_{R,\gamma_k}^2}{\tau_{\gamma_k}^p\|Q_{\gamma_k}\|_2^2}
\int_{B_{2R_{\tau_{\gamma_k}}}}\bar{V}_{i_0}(\frac{x}{\tau_{\gamma_k}+x_{i_0}})
|x|^pQ_{\gamma_k}^2(x)dx+Ce^{-\delta R_{\tau_{\gamma_k}}}\notag\\
&\leq\frac{A_{R,\gamma_k}^2}{\tau_{\gamma_k}^p\|Q_{\gamma_k}\|_2^2}
\int_{\mathbb{R}^N}\mathcal{X}_{B_{2R_{\tau_{\gamma_k}}}}\bar{V}_{i_0}
\big(\frac{x}{\tau_{\gamma_k}+x_{i_0}}\big)|x|^pQ_{\gamma_k}^2(x)dx+Ce^{-\delta R_{\tau_{\gamma_k}}}.
\end{align}
where $\tau_{\gamma_k} >0$ satisfies $\tau_{\gamma_k}=\sqrt{\frac{\gamma}{4-\gamma}}\frac{1}{\varepsilon_k}$
in view of Lemma \ref{refine energy} and (\ref{con-1}), and $\mathcal{X}_{R_{2R_{\tau_{\gamma_k}}}}$ is the characteristic function of the set $B_{2R_{\tau_{\gamma_k}}}$. Combining Theorem \ref{strong convergence theorem} and \eqref{decay exponentially}, we deduce that
\begin{align*}
\mathcal{X}_{B{2R_{\tau_{\gamma_k}}}}\bar{V}_{i_0}(\frac{x}{\tau_{\gamma_k}}
+x_{i_0})|x|^pQ_{\gamma_k}^2(x)\leq \sup_{B_{2R}}\bar{V}_{i_0}(x+x_{i_0})
\cdot Ce^{-\delta |x|}\in L^1(\mathbb{R}^N),
\end{align*}
and
\begin{align*}
\mathcal{X}_{B{2R_{\tau_{\gamma_k}}}}\bar{V}_{i_0}
(\frac{x}{\tau_{\gamma_k}}+x_{i_0})|x|^pQ_{\gamma_k}^2(x)\rightarrow \bar{V}_{i_0}(x_{i_0})|x|^pQ_2^2(x)
~a.e.~\mathbb{R}^N~as~k\rightarrow\infty.
\end{align*}
Note that $A_{R,\gamma_k}\rightarrow 1$ as $\gamma_k\nearrow 2$, we thus obtain from (\ref{c18}) and Lebesgue's dominated convergence theorem that
\begin{align}\label{c19}
\lim_{k\rightarrow\infty}\frac{e_a(\gamma_k)-\tilde{e}_a(\gamma_k)}{\varepsilon_k^p}
\leq\frac{\bar{V}_{i_0}(x_{i_0})}{\|Q_2\|_2^2}\int_{\mathbb{R}^N}|x|^pQ_2^2dx.
\end{align}

On the other hand, following the proof of Theorem \ref{concentration theorem} we denote $\bar{z}_k$ to be the unique global maximum point of $u_k$, and let $\tilde{v}_k$ be defined as in (\ref{c13}). Denote also $y_0\in \mathbb{R}^N$  to be the limit of $\bar{z}_k$ as $k\rightarrow \infty$. Since $V(y_0)=0$, then there exists an $x_j=y_0$ for some $1\leq j\leq n$. We claim that $\{\frac{\bar{z}_k-x_j}{\varepsilon_k}\}$ is bounded in $\mathbb{R}^N$. Indeed, if there  exists a subsequence, still denoted by $\{\gamma_k\}$, such that $|\frac{\bar{z}_k-x_j}{\varepsilon_k}|\rightarrow \infty$ as $k\rightarrow \infty $, it then follows from Fatou's Lemma that, for any $C>0$ sufficiently large,
\begin{align*}
\lim_{k\rightarrow\infty}\frac{e_a(\gamma_k)-\tilde{e}_a(\gamma_k)}{\varepsilon_k^{p_i}}
&\geq \lim_{k\rightarrow\infty}\int_{\mathbb{R}^N}\bar{V}_j(\varepsilon_k+\bar{z}_k)\Big|x
+\frac{\bar{z}_k-x_j}{\varepsilon_k}\Big|^{p_j}\tilde{v}_k^2dx\notag\\
&\geq\int_{\mathbb{R}^N}\lim_{k\rightarrow\infty}\bar{V}_j(\varepsilon_k+\bar{z}_k)\Big|x
+\frac{\bar{z}_k-x_j}{\varepsilon_k}\Big|^{p_j}\tilde{v}_k^2dx
\geq C\bar{V}_j(x_j),
\end{align*}
which however contradicts (\ref{c19}) owing to $p_j\leq p=max\{p_1,p_2,...,p_n\}$, and the claim is therefore true. Consequently, there exists a subsequence, still denoted by $\{\gamma_k\}$, such that
\begin{align*}
\frac{\bar{z}_k-x_j}{\varepsilon_k}\rightarrow \bar{z}_0~for~some~\bar{z}_0\in\mathbb{R}^N.
\end{align*}
Since $Q_2$ is a radial decreasing function and decays exponentially as $|x|\rightarrow \infty$, we then deduce that
\begin{align}\label{c20}
\lim_{k\rightarrow\infty}\frac{e_a(\gamma_k)-\tilde{e}_a(\gamma_k)}{\varepsilon_k^{p_i}}
&\geq \lim_{k\rightarrow\infty}\int_{\mathbb{R}^N}\bar{V}_j(\varepsilon_k+\bar{z}_k)\Big|x
+\frac{\bar{z}_k-x_j}{\varepsilon_k}\Big|^{p_j}\tilde{v}_k^2dx\notag\\
&\geq\bar{ V}_j(x_j)\int_{\mathbb{R}^N}|x+\bar{z}_0|^{p_j}\tilde{v}_0^2dx\notag\\
&\geq\frac{\bar{V}_j(x_j)}{\|Q_2\|_2^2}\int_{\mathbb{R}^N}|x|^{p_j}Q_2^2dx.
\end{align}
where $\tilde{v}_0>0$ is as in (\ref{c14}), and $``="$ in the last inequality of (\ref{c20}) holds if and only if $\bar{z}_0=0$.

Applying (\ref{c19}) and (\ref{c20}), it is not difficult to see that $p_j=p$ and then $V_j(x_j)=V_{i_0}(x_{i_0})$. Hence, $x_j=y_0\in \mathcal{Z}$ must be the flattest global minimum point of $V(x)$. Based on these facts, using (\ref{c19}) and (\ref{c20}) we see that (\ref{c20}) is essentially an equality. Therefore, $\bar{z}_0=0$ and (\ref{a11}) holds. The proof of Theorem \ref{theorem3} is completed.
\end{proof}

\end{document}